\numberwithin{equation}{section}
\newtheorem{theorem}{Theorem}[section]
\newtheorem{lemma}[theorem]{Lemma}
\newtheorem{definition}[theorem]{Definition}
\newtheorem{corollary}[theorem]{Corollary}
\newtheorem{claim}[theorem]{Claim}
\newtheorem{proposition}[theorem]{Proposition}
\newtheorem{question}{Question}
\newtheorem{remark}[theorem]{Remark}
\newtheorem{assumption}[theorem]{Assumption}
\title[Inverse problems for nonlinear nonlocal wave equations]{Well-posedness and inverse problems for semilinear nonlocal wave equations}
\author[Y.-H.~Lin]{Yi-Hsuan Lin}
\address{Department of Applied Mathematics, National Yang Ming Chiao Tung University, Hsinchu, Taiwan}
\email{yihsuanlin3@gmail.com}
\author[T.~Tyni]{Teemu Tyni}
\address{Unit of applied and computational mathematics, University of Oulu, Finland}
\email{teemu.tyni@oulu.fi}
\author[P. Zimmermann]{Philipp Zimmermann}
\address{Department of Mathematics, ETH Zurich, Z\"urich, Switzerland \& Departament de Matem\`atiques i Inform\`atica, Universitat de Barcelona, Barcelona, Spain}
\email{philipp.zimmermann@ub.edu}
\newcommand{\R}{{\mathbb R}}
\newcommand{\N}{{\mathbb N}}
\newcommand{\eps}{\varepsilon}
\newcommand {\p} {\partial}
\newcommand{\LC}{\left(}
\newcommand{\RC}{\right)}
\newcommand{\wt}{\widetilde}
\newcommand{\schwartz}{\mathscr{S}}
\newcommand{\tempered}{\mathscr{S}^{\prime}}
\newcommand{\fourier}{\mathcal{F}}
\newcommand{\ifourier}{\mathcal{F}^{-1}}
\newcommand{\distr}{\mathscr{D}^{\prime}}%distribution
\newcommand{\abs}[1]{\left\lvert #1 \right\rvert}%absolute value
\DeclareMathOperator{\supp}{supp} %support
\DeclareMathOperator{\loc}{loc} %distance
\newcommand{\weak}{\rightharpoonup}%weak convergence 
\newcommand{\weakstar}{\overset{\ast}{\rightharpoonup}}%weak star convergence 
\begin{document}

	\maketitle
	\begin{abstract}
		This article is devoted to forward and inverse problems associated with time-independent semilinear nonlocal wave equations. We first establish comprehensive well-posedness results for some semilinear nonlocal wave equations. The main challenge is due to the low regularity of the solutions of linear nonlocal wave equations. We then turn to an inverse problem of recovering the nonlinearity of the equation. More precisely, we show that the exterior Dirichlet-to-Neumann map uniquely determines homogeneous nonlinearities of the form $f(x,u)$ under certain growth conditions. On the other hand, we also prove that initial data can be determined by using passive measurements under certain nonlinearity conditions. The main tools used for the inverse problem are the unique continuation principle of the fractional Laplacian and a Runge approximation property. The results hold for any spatial dimension $n\in \N$.
		
		\medskip
		
		\noindent{\bf Keywords.} Fractional Laplacian, Nonlinear nonlocal wave equation, Calderón problem.
		
		\noindent{\bf Mathematics Subject Classification (2020)}: Primary 35R30; secondary 26A33, 42B37

	\end{abstract}

	\tableofcontents

	\section{Introduction}
	\label{sec: introduction}

	\subsection{Mathematical model and main results}
	In this paper, we study forward and inverse problems for \emph{semilinear nonlocal wave equations}. We prove that an a priori unknown nonlinearity $f(x,\tau)$, belonging to a certain subclass of Carath\'eodory functions, can be uniquely recovered from the \emph{Dirichlet-to-Neumann} (DN) map related to the semilinear nonlocal wave equation
	\begin{equation}
		\label{eq: nonlinear wave equation}
		\begin{cases}
			\partial_t^2u +(-\Delta)^su + f(x,u)=0 &\text{ in }\Omega_T,\\
			u=\varphi  &\text{ in }(\Omega_e)_T,\\
			u(0)=u_0, \quad \partial_t u(0)=u_1 &\text{ in }\Omega.
		\end{cases}
	\end{equation}
	Here and throughout this article $\Omega\subset\R^n$ is a bounded Lipschitz domain for $n\in \N$ with exterior $\Omega_e := \R^n\setminus \overline{\Omega}$, $T>0$ a finite time horizon and $s>0$ a non-integer. Moreover, we set $A_t\vcentcolon= A\times (0,t)$, for any subset $A\subseteq \R^n$ and $t>0$. In our study the magnitude of the time horizon does not play any role. 
	The nonlocal wave equation appears, for instance, as a special case in the study of \emph{peridynamics}, which is a nonlocal elasticity theory particularly used to study material dynamics with discontinuities, see e.g., \cite{peridynamics}.
	% e.g. (2.25) with scalar u and C(x,y)=|x-y|^{-n-2s}
	For the time being let us assume the well-posedness of \eqref{eq: nonlinear wave equation} in a suitable function space. Then, for any two measurement sets $W_1,W_2\subset\Omega_e$, let us formally introduce the  DN map $\Lambda^{f}_{u_0,u_1}$ of \eqref{eq: nonlinear wave equation} via
	\begin{equation}\label{DN map}
		\begin{split}
			\Lambda^{f}_{u_0,u_1}\varphi=\left. (-\Delta)^s u_\varphi \right|_{(W_2)_T},
		\end{split}
	\end{equation}
	for any $\varphi\in C_c^{\infty}((W_1)_T)$, where $u_\varphi \colon \R^n_T\to \R$ denotes the unique solution of \eqref{eq: nonlinear wave equation}. The DN map will be rigorously defined in Section~\ref{sec: DN}. In our study we want to answer:
	\begin{question}
		Can one determine under suitable assumptions the nonlinearity $f$ or the initial conditions $u_0,u_1$?
	\end{question}

	In the special case of $f$ being homogeneous and satisfying the conditions in Assumption~\ref{main assumptions on nonlinearities}, we are able to establish the following positive results.
	
	\begin{theorem}[Recovery of the nonlinearity]\label{Thm: recovery of nonlinearity}
		Let $\Omega \subset \R^n$ be a bounded domain with Lipschitz boundary, $T>0$ and $s>0$ a non-integer. Let $W_1,W_2\subset \Omega_e$ be open sets. Suppose the nonlinearities $f_j$ satisfy the conditions in Assumption \ref{main assumptions on nonlinearities} with $F(x,\tau)\geq 0$.
		Suppose also that $f_j(x,\tau)$ are $(r+1)$-homogeneous with $r$ as in Assumption~\ref{main assumptions on nonlinearities} and $0<r\leq 1$.
		Let $\Lambda_j \vcentcolon=\Lambda^{f_j}_{0,0}$ be the DN maps of
		\begin{equation}
			\label{eq: nonlinear wave equation j=1,2}
			\begin{cases}
				\partial_t^2u_j +(-\Delta)^su_j + f_j(x,u_j)=0 &\text{ in }\Omega_T,\\
				u_j=\varphi  &\text{ in }(\Omega_e)_T,\\
				u_j(0)=\p_t u_j(0)=0 &\text{ in }\Omega,
			\end{cases}
		\end{equation}
		for $j=1,2$, satisfying
		\begin{equation}\label{same DN map in thm}
			\Lambda_1(\varphi)\big|_{(W_2)_T}=\Lambda_2(\varphi)\big|_{(W_2)_T}  
		\end{equation}
		for any $\varphi \in C^\infty_c((W_1)_T)$. Then there holds $f_1(x,\tau)=f_2(x,\tau)$ for almost all $x\in\Omega$ and $\tau\in\R$.
	\end{theorem}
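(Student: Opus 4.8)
We would prove Theorem~\ref{Thm: recovery of nonlinearity} by combining three ingredients: the unique continuation principle (UCP) for the fractional Laplacian, which reduces the equality of the Dirichlet-to-Neumann data to an identity of the form $f_1(x,u_\varphi)=f_2(x,u_\varphi)$ on $\Omega_T$ for a common solution $u_\varphi$; a first-order linearization obtained by rescaling the exterior data $\varphi\mapsto\eps\varphi$ together with the well-posedness estimates, which upgrades this to the linear profiles $v_\varphi$; and the Runge approximation property for the linear nonlocal wave equation, which lets us reach arbitrary constant profiles and thereby conclude.

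\emph{Step 1 (UCP reduction).} Fix $\varphi\in C^\infty_c((W_1)_T)$ and let $u_j:=u_{j,\varphi}$ solve \eqref{eq: nonlinear wave equation j=1,2}. Set $w:=u_1-u_2$. Then $w=0$ in $(\Omega_e)_T$, and \eqref{same DN map in thm} gives $(-\Delta)^sw=0$ in $(W_2)_T$. For a.e.\ $t\in(0,T)$ the function $w(\cdot,t)$ lies in $H^s(\R^n)$ and satisfies $w(\cdot,t)=0$ and $(-\Delta)^sw(\cdot,t)=0$ on the nonempty open set $W_2\subset\Omega_e$, so the UCP for the fractional Laplacian forces $w(\cdot,t)\equiv 0$. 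Hence $u_1=u_2=:u_\varphi$ in $\R^n_T$, and subtracting the two equations in \eqref{eq: nonlinear wave equation j=1,2} yields
\[
 f_1(x,u_\varphi(x,t))=f_2(x,u_\varphi(x,t))\qquad\text{for a.e. }(x,t)\in\Omega_T .
\]

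\emph{Step 2 (linearization by scaling).} Apply Step~1 with the data $\eps\varphi$ for small $\eps>0$, and let $v_\varphi$ solve the linear problem $\partial_t^2v+(-\Delta)^sv=0$ in $\Omega_T$, $v=\varphi$ in $(\Omega_e)_T$, $v(0)=\partial_tv(0)=0$. The well-posedness results of the earlier sections (using $F\geq 0$ and $0<r\leq 1$) give the a priori bound $\|u_{\eps\varphi}\|\leq C\eps$ in the relevant energy space, and by the growth and $(r+1)$-homogeneity conditions in Assumption~\ref{main assumptions on nonlinearities} the source term $f_j(\cdot,u_{\eps\varphi})$ is of size $O(\eps^{r+1})$ in the appropriate space. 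Since $\rho:=u_{\eps\varphi}-\eps v_\varphi$ solves the linear nonlocal wave equation with this source and zero exterior and initial data, the linear estimates give $\|\eps^{-1}u_{\eps\varphi}-v_\varphi\|_{L^2(\Omega_T)}\leq C\eps^{r}\to 0$, so along a subsequence $\eps^{-1}u_{\eps\varphi}\to v_\varphi$ a.e.\ in $\Omega_T$. Dividing the identity of Step~1 (for the data $\eps\varphi$) by $\eps^{r+1}$ and using homogeneity, $\eps^{-(r+1)}f_j(x,u_{\eps\varphi})=f_j(x,\eps^{-1}u_{\eps\varphi})$; letting $\eps\to 0$ and using the continuity of $\tau\mapsto f_j(x,\tau)$ yields
\[
 f_1(x,v_\varphi(x,t))=f_2(x,v_\varphi(x,t))\qquad\text{for a.e. }(x,t)\in\Omega_T,\ \forall\,\varphi\in C^\infty_c((W_1)_T).
\]

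\emph{Step 3 (Runge approximation and conclusion).} By the Runge approximation property for the linear nonlocal wave equation with exterior data supported in $(W_1)_T$, the set $\{v_\varphi|_{\Omega_T}\}$ is dense enough in $L^2(\Omega_T)$ to approximate a profile $\tau\,\chi(t)$, where $\chi$ is supported away from $t=0$ and equals $1$ on a subinterval. Fixing $\tau\in\R$, choosing $\varphi_k$ with $v_{\varphi_k}\to\tau$ in $L^2$ on a set of positive measure, and passing to a subsequence for a.e.\ convergence, the continuity of $f_j$ and Step~2 give $f_1(x,\tau)=f_2(x,\tau)$ for a.e.\ $x\in\Omega$. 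Running this over a countable dense set of values $\tau\in\Q$ and invoking continuity of $f_j(x,\cdot)$ once more yields $f_1(x,\tau)=f_2(x,\tau)$ for a.e.\ $x\in\Omega$ and all $\tau\in\R$, as claimed.

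The main obstacle is the linearization in Step~2: because solutions of the linear fractional wave equation have only low regularity, the $\eps\to0$ limit must be carried out in a topology weak enough to be controlled by the well-posedness estimates yet strong enough to produce a.e.\ convergence that can be fed into the Carath\'eodory nonlinearities. The restriction $0<r\leq 1$ is precisely what makes the remainder $o(\eps)$ and keeps both the forward problem and this limiting argument within the available function spaces.
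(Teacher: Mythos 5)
Your proposal follows essentially the same route as the paper: UCP to force $u_1=u_2$ and hence $f_1(\cdot,u_\varphi)=f_2(\cdot,u_\varphi)$, linearization by scaling $\varphi\mapsto\eps\varphi$ and the well-posedness/energy estimates to show the remainder is $o(\eps)$ so that $\eps^{-1}u_{\eps\varphi}\to v_\varphi$, and finally Runge approximation on the linear profiles $v_\varphi$. The only (minor) differences are technical: you pass to the limit via a.e.\ convergence along subsequences combined with pointwise Carath\'eodory continuity of $\tau\mapsto f_j(x,\tau)$, whereas the paper invokes the continuity of the Nemytskii operator $f_j\colon L^2(\Omega_T)\to L^{2/(r+1)}(\Omega_T)$ (their Lemma~\ref{lemma: Continuity of Nemytskii operators}); and at the end you approximate each constant $\tau\in\Q$ separately and then use countability plus continuity, whereas the paper approximates only the constant $1$ and then extends to all $\tau$ by $(r+1)$-homogeneity. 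Both variants are sound and buy nothing substantially different, so this is the same proof with slightly more elementary bookkeeping at the limit steps.
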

	
	Moreover, with the nonlocality at hand, we are able to determine the initial data without any knowledge of the nonlinearities.

	\begin{theorem}[Recovery of the initial values by passive measurements]\label{Thm: recovery of initial values}
		Let $\Omega \subset \R^n$ be a bounded domain with Lipschitz boundary, $T>0$ and $s>0$ a non-integer. Suppose the nonlinearities $f_j$ satisfy the conditions in Assumption \ref{main assumptions on nonlinearities} and $(u_{0,j},u_{1,j})\in \wt H^s(\Omega)\times L^2(\Omega)$. Let $\Lambda_j\vcentcolon=\Lambda^{f_j}_{u_{0,j},u_{1,j}}$ be the DN maps of 
		\begin{equation}\label{equation det of initial j=12}
			\begin{cases}
				\partial_t^2u_j +(-\Delta)^su_j + f_j(x,u_j) =0 &\text{ in }\Omega_T,\\
				u_j=\varphi  &\text{ in }(\Omega_e)_T,\\
				u_j(0)=u_{0,j}, \quad \partial_t u_j(0)=u_{1,j} &\text{ in }\Omega,
			\end{cases}
		\end{equation}
		for $j=1,2$. Suppose that 
		$\Lambda_1(0)\big|_{(W_2)_T}=\Lambda_2(0)\big|_{(W_2)_T}$ holds, then one has 
		\begin{equation}
			u_{0,1}=u_{0,2},\quad u_{1,1}=u_{1,2} \text{ in }\Omega.
		\end{equation}
	\end{theorem}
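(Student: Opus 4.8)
\emph{Approach.} The idea is that passive measurements ($\varphi=0$), combined with the nonlocality, pin down the solutions themselves — and hence their Cauchy data at $t=0$ — without using any information on the $f_j$'s. Concretely, the difference $v\vcentcolon= u_1-u_2$ of the two solutions of \eqref{equation det of initial j=12} will be seen to vanish in the whole exterior $(\Omega_e)_T$ and to have vanishing fractional Laplacian in $(W_2)_T$; the unique continuation principle (UCP) for $(-\Delta)^s$ then forces $v\equiv 0$, and evaluating at $t=0$ finishes the proof.

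\emph{Step 1: set-up.} Let $u_j$, $j=1,2$, be the unique solutions of \eqref{equation det of initial j=12} with $\varphi=0$ provided by the well-posedness theory, so that $u_j\in C([0,T];\wt H^s(\Omega))$ with $\partial_t u_j\in C([0,T];L^2(\Omega))$ and $u_j=0$ in $(\Omega_e)_T$. Put $v\vcentcolon= u_1-u_2$. Since $W_2\subset\Omega_e$, we have $(W_2)_T\subset(\Omega_e)_T$, hence $v=0$ in $(W_2)_T$. Moreover, by definition $\Lambda_j(0)=(-\Delta)^s u_j|_{(W_2)_T}$, so the hypothesis $\Lambda_1(0)|_{(W_2)_T}=\Lambda_2(0)|_{(W_2)_T}$ says precisely that $(-\Delta)^s v=0$ in $(W_2)_T$. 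Up to this point the nonlinearities $f_j$ have played no role whatsoever — this is exactly the mechanism that makes passive measurements effective.

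\emph{Step 2: unique continuation and conclusion.} For a.e.\ $t\in(0,T)$ the slice $v(\cdot,t)$ belongs to $\wt H^s(\Omega)\subset H^s(\R^n)$ and satisfies $v(\cdot,t)=(-\Delta)^s v(\cdot,t)=0$ in the nonempty open set $W_2$. By the unique continuation principle for the fractional Laplacian this forces $v(\cdot,t)\equiv 0$ on $\R^n$; as this holds for a.e.\ $t$, we obtain $u_1=u_2$ on $\R^n_T$. Finally, the temporal continuity of the solutions allows passing to the limit $t\to 0^+$: $u_{0,1}=u_1(0)=u_2(0)=u_{0,2}$; and differentiating the identity $u_1=u_2$ in $t$ (which gives equality in $C([0,T];L^2(\Omega))$) and again letting $t\to 0^+$ yields $u_{1,1}=\partial_t u_1(0)=\partial_t u_2(0)=u_{1,2}$.

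\emph{Main obstacle.} There is no deep new difficulty here: the nonlocality, via UCP, does all the work, and the argument uses no quantitative estimate and no structural assumption on $f_j$ beyond those guaranteeing well-posedness. The points that require care are purely technical bookkeeping — verifying that the well-posedness theory delivers enough temporal regularity to make sense of the Cauchy traces at $t=0$ and of the time slices $v(\cdot,t)$, and that the equality of the DN maps can be read slicewise in $t$ so that UCP is applicable for a.e.\ $t\in(0,T)$.
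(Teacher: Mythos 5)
Your proposal is correct and follows essentially the same route as the paper: both read the hypothesis $\Lambda_1(0)|_{(W_2)_T}=\Lambda_2(0)|_{(W_2)_T}$ as $(-\Delta)^s(u_1-u_2)=0$ in $(W_2)_T$, note that $u_1-u_2=0$ in $(W_2)_T$ because the exterior data vanish, and invoke the UCP (Proposition~\ref{prop:UCP}) to conclude $u_1\equiv u_2$ in $\R^n_T$, whence the Cauchy data at $t=0$ agree. Your version is slightly more explicit in applying UCP slicewise and in passing to the trace at $t=0$ using the $C([0,T];\widetilde H^s(\Omega))\cap C^1([0,T];L^2(\Omega))$ regularity, but the key ideas and their order are identical.
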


	\begin{remark}
		Let us emphasize the following observations.
		\begin{enumerate}[(i)]
			\item As the exterior data $\varphi =0$ in $(\Omega_e)_T$, this measurement is usually regarded as the \emph{passive measurement}\footnote{Here the passive measurements are generated by unknown sources, without injecting new inputs or affecting the existing one.}, which forms a \emph{single measurement} result. 
			
			\item The preceding theorem states that even though we might not be able to determine the unknown nonlinearity in general, we can still recover the initial data. Analogous results in the local case were studied in the work \cite{LLL2024determining}. However, in order to determine the initial data in the local case, one needs some extra tools, such as observability estimate for linear wave equations. Nevertheless, the nonlocality helps us to recover the initial condition without any observability estimates. 
		\end{enumerate}
		
	\end{remark}
	
	In addition, when the coefficient $f(x,u)=a(x)u$, for sufficiently regular coefficient $a(x)$, we are able to determine both initial data and $a(x)$ simultaneously.
	
	\begin{corollary}[Simultaneous recovery of both initial data and coefficients]\label{Cor: simul near}
		Let $\Omega \subset \R^n$ be a bounded domain with Lipschitz boundary, $T>0$ and $s>0$ a non-integer. Suppose either $0\leq a_j\in L^p(\Omega)$ for some $p$ satisfying \eqref{eq: cond on p}, or $a_j\in L^\infty(\Omega)$, and $(u_{0,j},u_{1,j})\in \wt H^s(\Omega)\times L^2(\Omega)$ for $j=1,2$. Let $\Lambda_j\vcentcolon=\Lambda^{a_j}_{u_{0,j},u_{1,j}}$ be the DN maps of the linear nonlocal wave equation 
		\begin{equation}\label{linear equation det of initial j=12}
			\begin{cases}
				\partial_t^2u_j +(-\Delta)^su_j + a_j(x)u_j =0 &\text{ in }\Omega_T,\\
				u_j=\varphi  &\text{ in }(\Omega_e)_T,\\
				u_j(0)=u_{0,j}, \quad \partial_t u_j(0)=u_{1,j} &\text{ in }\Omega,
			\end{cases}
		\end{equation}
		for $j=1,2$. Suppose that 
		\begin{equation}\label{same DN map in thm 2}
			\Lambda_1(\varphi)\big|_{(W_2)_T}=\Lambda_2(\varphi)\big|_{(W_2)_T}  
		\end{equation}
		for any $\varphi \in C^\infty_c((W_1)_T)$, then there holds
		\begin{equation}
			u_{0,1}=u_{0,2},\quad u_{1,1}=u_{1,2} \quad  \text{and}\quad a_1=a_2 \text{ in }\Omega.
		\end{equation}
		
	\end{corollary}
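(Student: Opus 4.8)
The plan is to deduce Corollary~\ref{Cor: simul near} from Theorem~\ref{Thm: recovery of initial values} together with the linearity of the equation. First I would observe that the hypotheses on $a_j$ (either $0\leq a_j\in L^p(\Omega)$ with $p$ as in \eqref{eq: cond on p}, or $a_j\in L^\infty(\Omega)$) are precisely the conditions guaranteeing that $f_j(x,\tau)\vcentcolon= a_j(x)\tau$ satisfies Assumption~\ref{main assumptions on nonlinearities}: such a linear $f_j$ is Carath\'eodory, $1$-homogeneous, and the sign assumption $a_j\geq 0$ (in the $L^p$ case) yields the required non-negativity of the potential $F(x,\tau)=\tfrac12 a_j(x)\tau^2\geq 0$; in the $L^\infty$ case the bounds are immediate. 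Hence the DN maps $\Lambda_j=\Lambda^{a_j}_{u_{0,j},u_{1,j}}$ are well-defined and the equations \eqref{linear equation det of initial j=12} are special cases of \eqref{equation det of initial j=12}.

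Next I would apply Theorem~\ref{Thm: recovery of initial values} directly: the assumed equality $\Lambda_1(0)|_{(W_2)_T}=\Lambda_2(0)|_{(W_2)_T}$ (which is implied by \eqref{same DN map in thm 2}, taking $\varphi=0$) immediately gives $u_{0,1}=u_{0,2}=\vcentcolon u_0$ and $u_{1,1}=u_{1,2}=\vcentcolon u_1$ in $\Omega$. It remains to recover the coefficients. For this I would use the full DN map, i.e., general exterior data $\varphi\in C_c^\infty((W_1)_T)$, and the linear structure. Let $u_j$ solve \eqref{linear equation det of initial j=12} with the (now common) initial data and a fixed $\varphi$, and let $v_j$ solve the same equation but with zero exterior data and the same initial data $(u_0,u_1)$; by linearity $w_j\vcentcolon= u_j-v_j$ solves the equation with coefficient $a_j$, exterior data $\varphi$, and \emph{zero} initial data. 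Since $v_1=v_2$ on $(W_2)_T$ (this is exactly what the passive-measurement step gives, as $v_j$ is the solution driving $\Lambda_j(0)$), the hypothesis \eqref{same DN map in thm 2} transfers to $(-\Delta)^s w_1=(-\Delta)^s w_2$ on $(W_2)_T$, i.e., the DN maps of the zero-initial-data linear problems agree.

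At this point $a_1=a_2$ follows from the first-order linearization / Runge approximation machinery already developed for Theorem~\ref{Thm: recovery of nonlinearity}: differentiating the solution map at $\varphi=0$ (or simply using that the problem is already linear, so $w_j$ depends linearly on $\varphi$) produces the integral identity $\int_{\Omega_T}(a_1-a_2)\,w\,\psi\,dx\,dt=0$ for solutions $w,\psi$ of the linear equation with coefficient $a_2$ and its (time-reversed) adjoint, and then the Runge approximation property for the nonlocal wave equation lets one choose $w,\psi$ so that the product $w\psi$ is dense enough in $L^1(\Omega_T)$ to conclude $a_1=a_2$ a.e.\ in $\Omega$; the time-independence of $a_j$ is what allows passing from the space-time identity to an identity in $x$ alone. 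The main obstacle I anticipate is bookkeeping the low regularity: one must make sure the products $w\psi$ and $(a_1-a_2)w\psi$ are integrable under the stated $L^p$/$L^\infty$ hypotheses on $a_j$ (this is where the condition \eqref{eq: cond on p} and the mapping properties of the linear nonlocal wave solution operator enter), and that the Runge approximation is applied in a function space compatible with these bounds — but no genuinely new idea beyond what Theorems~\ref{Thm: recovery of nonlinearity} and~\ref{Thm: recovery of initial values} already supply is needed.
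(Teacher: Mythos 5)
The first half of your argument tracks the paper: you note that $f_j(x,\tau)=a_j(x)\tau$ falls under Assumption~\ref{main assumptions on nonlinearities}, and you use Theorem~\ref{Thm: recovery of initial values} (passive measurement plus UCP) to identify the initial data, after which the decomposition $u_j = v_j + w_j$ correctly transfers the DN-map equality to the zero-initial-data solutions $w_j$. Up to this point your reasoning is sound (and differs only cosmetically from the paper, which applies the UCP directly to $u^{(1)}_\varphi - u^{(2)}_\varphi$ for arbitrary $\varphi$, getting the initial-data identification for free).

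The gap is in the last step. Having arrived at $(-\Delta)^s w_1 = (-\Delta)^s w_2$ on $(W_2)_T$ and $w_1 = w_2 = \varphi$ on $(\Omega_e)_T$, you should apply the UCP (Proposition~\ref{prop:UCP}) once more to $w_1 - w_2$, which forces $w_1 = w_2$ in $\R^n_T$ and hence, by subtracting the two PDEs, the \emph{pointwise} identity $(a_1 - a_2)w_1 = 0$ a.e.\ in $\Omega_T$. Then a single application of Proposition~\ref{prop: runge} suffices: the solutions are dense in $L^2(\Omega_T)$, and since $p\geq 2$ gives $a_1-a_2\in L^2(\Omega_T)$, the bounded functional $\psi\mapsto\int_{\Omega_T}(a_1-a_2)\psi\,dx\,dt$ vanishes on a dense subset and hence identically, giving $a_1=a_2$. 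Instead, you reach for an Alessandrini-type integral identity $\int_{\Omega_T}(a_1-a_2)w\psi\,dx\,dt=0$ and density of \emph{products} $w\psi$. This is the extra, harder ingredient that the nonlocal UCP is designed to let you avoid. It is also not actually justified in the $L^p$ case with $p<\infty$: Runge gives convergence of each factor only in $L^2(\Omega_T)$, so the products converge only in $L^1(\Omega_T)$, and the pairing $h\mapsto\int(a_1-a_2)h$ is not continuous on $L^1$ unless $a_1-a_2\in L^\infty$. To make your route work for $a_j\in L^p$, $p<\infty$, you would need stronger convergence of the approximating solutions (e.g.\ in $L^\infty(0,T;\widetilde H^s(\Omega))$), which Proposition~\ref{prop: runge} does not supply. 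The paper's route needs no products and no adjoint-equation bookkeeping at all.
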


	\subsection{Earlier literature}
	Inverse problems for nonlocal partial differential equations (PDEs) have been widely studied in the recent years. In the pioneering work \cite{GSU20} the authors determined unknown potential $q$ in the fractional Schr\"odinger equation $(-\Delta)^s u +q u =0$ in a bounded open set by using its exterior DN map. The main tools in their proof are the \emph{unique continuation property} (UCP) for the fractional Laplacian, and the \emph{Runge approximation property}. These two remarkable ingredients help us to study several inverse problems that are widely open in the local setting. For example, both a surrounding potential and an unknown inclusion can be determined simultaneously as shown in \cite{CLL2017simultaneously}. If-and-only-if -monotonicity relations have been derived in \cite{harrach2017nonlocal-monotonicity,harrach2020monotonicity,lin2020monotonicity}, and a determination of both drift and potentials result was proved in \cite{cekic2020calderon}. The aforementioned results are either open, or not true for their local counterparts. In short, we regard the nonlocality as a tool in solving related inverse problems.

	So far, most of the existing works in this area are focused on the determination or reconstruction of lower order coefficients via their DN maps (for example, see \cite{RS17,GRSU18,CMRU20}). Meanwhile, some nonlinear nonlocal inverse problems have been addressed in \cite{LL2020inverse,LL2022inverse}. Very recently, the recovery of leading order coefficients in nonlocal operators have been investigated, such as \cite{CGRU2023reduction,LLU2023calder}, where the authors found a novel reduction formula from the nonlocal to the local case. We also refer readers to \cite{zimmermann2023inverse,RZ2022LowReg,CRTZ-2022} and references therein about inverse problems for different type of nonlocal operators. Furthermore, there are even uniqueness results for leading order coefficients in nonlinear nonlocal equations like for equations of porous medium type \cite{LZ2023unique} or of $p$-Laplace type \cite{KRZ-2023,KLZ-2022}.
	
	Inverse problems for nonlinear (local) hyperbolic equations have been widely investigated. In fact, it is now known that the nonlinear interaction of waves can generate new waves, which can be taken  advantage of when studying related inverse problems. This field started from \cite{KLU18}, where it was proved that the local measurements can determine global topology and differentiable structure uniquely for a semilinear wave equation with a quadratic nonlinearity.  In further, inverse problems were studied for general semilinear wave equations on Lorentzian manifolds \cite{LUW18}, and related inverse problems were investigated for the Einstein-Maxwell equation in \cite{LUW17arXiv}. We also refer to \cite{KLOU14,dHUW18,WZ19,LLL2024determining,LLPT20,LLPT21,LLPT24}, and the many fruitful references therein for more inverse problems for hyperbolic PDEs.
	
	Next, let us mention that there are also a few results on inverse problems for nonlocal wave equations. On the one hand in \cite{KLW2022} it has been shown that from the DN map related to 
 \[
\begin{cases}
			\partial_t^2u +(-\Delta)^su + qu=0 &\text{ in }\Omega_T,\\
			u=\varphi  &\text{ in }(\Omega_e)_T,\\
			u(0)=u_0, \quad \partial_t u(0)=u_1 &\text{ in }\Omega
		\end{cases}
 \]
 one can uniquely determine the potential $q\in L^{\infty}(\Omega)$, when $0<s<1$. On the other hand in \cite{zimmermann2024calderon}, the same unique determination question has been studied for the so called \emph{(nonlinear) nonlocal viscous wave equation}
	 \begin{equation}
  \label{eq: viscous wave eq}
\begin{cases}
			\partial_t^2u +(-\Delta)^s\partial_t u+(-\Delta)^su + f(u)=0 &\text{ in }\Omega_T,\\
			u=\varphi  &\text{ in }(\Omega_e)_T,\\
			u(0)=u_0, \quad \partial_t u(0)=u_1 &\text{ in }\Omega.
		\end{cases}
 \end{equation}
 In this article it has been established that unique determination of $f$ holds if
 \begin{enumerate}
     \item $f(u)=qu$ with $q\in L^{\infty}(0,T;L^p(\Omega))$, where $p$ satisfies \eqref{eq: cond on p}, and $q$ has a certain continuity property in the time variable

     \item or $f$ satisfying the condition (i) in Assumption~\ref{main assumptions on nonlinearities}, being $r+1$ homogeneous and $0<r\leq 2$.
 \end{enumerate}
 Let us point out that a main difference between the problems \eqref{eq: nonlinear wave equation} and \eqref{eq: viscous wave eq} is that in the later case one can establish a Runge approximation theorem in $L^2(0,T;\widetilde{H}^s(\Omega))$ (see \cite[Proposition~4.2]{zimmermann2024calderon}) instead of $L^2(\Omega_T)$. This in turn allows to handle larger $r$ values. Finally, let us recall that the difference of these approximation results rest on the fact that solutions to the equation \eqref{eq: viscous wave eq} are much more regular as for \eqref{eq: nonlinear wave equation} and in fact the latter can be obtained by a certain approximation process of the first one (see \cite{LionsMagenesVol1} or Claim~\ref{claim: integration by parts} below). Actually, the loss term $(-\Delta)^s\partial_t$ regularizes the solution $u$ to \eqref{eq: viscous wave eq} such that $\partial_t u$ belongs to $L^2(0,T;H^s(\R^n))$, whereas the solution $v$ of the nonlocal wave equation \eqref{eq: nonlinear wave equation} only satisfies $\partial_t v\in L^2(\R^n_T)$.
	
	\subsection{Organization of the paper}
	We start in Section~\ref{sec: preliminaries} by defining the used function spaces, the fractional Laplacian, and by recalling their main properties. Section~\ref{sec: well-posedness} concerns the well-posedness of the nonlinear nonlocal wave equation. Under certain decay assumptions we show the existence, uniqueness and energy estimates for the solutions. We also define the exterior DN map, which will be the measurement data used for inverse problems later on. Finally, in Section~\ref{sec: inverse problem} we study the continuity of the nonlinear terms, prove a Runge approximation theorem under arbitrary initial data, and finally establish determination of the nonlinear terms and initial data, given the DN map.

	\section{Preliminaries}
	\label{sec: preliminaries}
	
	Throughout this article the space dimension $n$ is a fixed positive integer and $\Omega \subset \R^n$ is an open set. In this section, we introduce fundamental properties of function spaces and operators which will be used in our study. 
	
	\subsection{Fractional Sobolev spaces and fractional Laplacian}
	\label{sec: Fractional Sobolev spaces}
	
	We denote by $\schwartz(\R^n)$ and $\tempered(\R^n)$ Schwartz functions and tempered distributions respectively. We define the Fourier transform by
	\begin{equation}
		\fourier u(\xi) \vcentcolon = \int_{\R^n} u(x)e^{-\mathrm{i}x \cdot \xi} \,dx,
	\end{equation}
	which is occasionally also denoted by $\widehat{u}$, where $\mathrm{i}=\sqrt{-1}$. By duality the Fourier transform can be extended to the space of tempered distributions where it will again be denoted by $\fourier u = \widehat{u}$, where $u \in \tempered(\R^n)$. We denote the inverse Fourier transform by $\ifourier$. 
	
	Given $s\in\R$, the $L^2$-based fractional Sobolev space $H^{s}(\R^{n})$ is the set of all tempered distributions $u\in\tempered(\R^n)$ such that
	\begin{equation}\notag
		\|u\|_{H^{s}(\mathbb{R}^{n})}\vcentcolon = \left\|\langle D\rangle^s u \right\|_{L^2(\R^n)}<\infty,
	\end{equation}
	where $\langle D\rangle^s$ is the Bessel potential operator of order $s$ with Fourier symbol $$\LC 1+|\xi|^2\RC^{s/2}.$$ 
	The fractional Laplacian of order $s\geq 0$ can be defined as the Fourier multiplier
	\begin{equation}\label{eq:fracLapFourDef}
		(-\Delta)^{s} u = \ifourier\LC \abs{\xi}^{2s}\widehat{u}(\xi)\RC,
	\end{equation}
	for $u \in \tempered(\R^n)$ whenever the right-hand side of the above identity is well-defined. 
	In addition, it is also known that for $s\geq 0$, an equivalent norm on $H^s(\R^n)$ is given by
	\begin{equation}
		\label{eq: equivalent norm on Hs}
		\|u\|_{H^s(\R^n)}^*= \|u\|_{L^{2}(\mathbb{R}^{n})}+\left\|(-\Delta)^{s/2}u  \right\|_{L^{2}(\mathbb{R}^{n})},
	\end{equation}
	and the fractional Laplacian $(-\Delta)^{s}\colon H^{t}(\R^n) \to H^{t-2s}(\R^n)$ is a bounded linear operator for all $s \geq0$ and $t \in \R$. The next results assert the unique continuation property (UCP) and a suitable Poincar\'e inequality for the fractional Laplacian on bounded domains $\Omega\subset\R^n$.
	
	\begin{proposition}[{UCP for fractional Laplacians}]\label{prop:UCP} 
		Let $s> 0$ be a non-integer and $t\in\R$. If $u\in H^t(\R^n)$ satisfies $u=(-\Delta)^s u=0$ in a nonempty open subset $V\subset \R^n$, then $u\equiv 0$ in $\R^n$.
	\end{proposition}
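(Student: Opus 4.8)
The plan is to deduce the statement from the classical unique continuation principle for the fractional Laplacian in the range $s\in(0,1)$ --- itself proved via the Caffarelli--Silvestre extension together with a Carleman estimate --- and then to bootstrap from that range to an arbitrary non-integer $s>0$. So the proof splits into a base case and a reduction step, and I would present the base case first.

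For the base case $s\in(0,1)$, I would pass to the Caffarelli--Silvestre extension: associate to $u\in H^t(\R^n)$ the function $\wt u$ on the half-space $\R^{n+1}_+=\R^n\times(0,\infty)$ solving the degenerate elliptic equation $\Div(y^{1-2s}\nabla\wt u)=0$ with $\wt u(\cdot,0)=u$ and weighted co-normal trace $-\lim_{y\to0^+}y^{1-2s}\p_y\wt u=c_s\,(-\Delta)^s u$ for a nonzero constant $c_s$. Since $y^{1-2s}$ is a Muckenhoupt $A_2$-weight, this extension problem is well-posed in the associated weighted energy space and $\wt u$ is smooth for $y>0$. The hypotheses $u=(-\Delta)^s u=0$ in $V$ then say precisely that $\wt u$ has vanishing Dirichlet and vanishing weighted Neumann data on $V\times\{0\}$; consequently the even reflection of $\wt u$ across $\{y=0\}$ glues to a weak solution of the degenerate equation in a full two-sided neighbourhood in $\R^{n+1}$ of any point $x_0\in V$, because both the trace and the weighted co-normal derivative match across $V\times\{0\}$.

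To close the base case I would apply the strong unique continuation property for this $A_2$-weighted degenerate elliptic operator --- obtained from a Carleman estimate in the spirit of R\"uland --- which forces $\wt u$ to vanish in a neighbourhood of $(x_0,0)$, and then the weak unique continuation property for the same operator, together with the connectedness of $\R^{n+1}_+$, to conclude $\wt u\equiv0$ on $\R^{n+1}_+$ and hence $u\equiv0$. This last step --- establishing (or invoking) the Carleman estimate for the degenerate operator and carefully handling the weighted trace spaces in the reflection argument --- is the main technical obstacle; everything else in the base case is routine, and in practice one may simply cite \cite{GSU20} here.

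Finally, I would bootstrap to an arbitrary non-integer $s>0$. Write $s=m+\sigma$ with $m\vcentcolon=\floor{s}\in\{0,1,2,\dots\}$ and $\sigma\in(0,1)$; if $m=0$ we are already in the base case, so assume $m\geq1$ and set $v\vcentcolon=(-\Delta)^m u\in H^{t-2m}(\R^n)$. Since $(-\Delta)^m$ is a local differential operator of order $2m$ and $u=0$ in the open set $V$, we have $v=0$ in $V$; and since the Fourier symbols multiply, $(-\Delta)^\sigma v=(-\Delta)^\sigma(-\Delta)^m u=(-\Delta)^s u=0$ in $V$. Applying the base case with exponent $\sigma\in(0,1)$ and Sobolev index $t-2m$ yields $v\equiv0$, i.e.\ $\abs{\xi}^{2m}\widehat u(\xi)=0$ in $\tempered(\R^n)$, so $\supp\widehat u\subseteq\{0\}$ and $u$ is a polynomial. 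As a nonzero polynomial belongs to no $H^t(\R^n)$, this forces $u\equiv0$, which finishes the argument. (The reduction and the Fourier-support step are elementary; all the analytic weight is in the base case described above.)
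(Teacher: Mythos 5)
Your proof is correct and takes essentially the same route as the paper: cite \cite[Theorem 1.2]{GSU20} for the base case $s\in(0,1)$ (the paper does not reprove the Caffarelli--Silvestre/Carleman machinery you sketch, it just cites this result), and reduce a general non-integer $s$ to that case by applying the local operator $(-\Delta)^m$ with $m=\floor{s}$. The only cosmetic difference is in the last step, where you pass from $(-\Delta)^m u\equiv 0$ to $u\equiv 0$ via the Fourier-support/polynomial argument, whereas the paper instead invokes the classical unique continuation property for the Laplacian iteratively; both finishes are standard and correct.
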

	
	The preceding proposition was first shown in \cite[Theorem 1.2]{GSU20} for the case $s\in (0,1)$, in which case the fractional Laplacian $(-\Delta)^s$ can be equivalently computed as the singular integral (up to a constant depending on $n\in \N$ and $s\in (0,1)$) 
	\[
	(-\Delta)^su(x)=\text{p.v.}\int_{\R^n}\frac{u(x)-u(y)}{|x-y|^{n+2s}}\,dy
	\]
	for sufficiently nice functions $u$, where p.v. stands for the Cauchy principal value. For the higher order case $s>1$, one can apply the standard Laplacian to the equation, then the classical UCP for the Laplacian yields iteratively the desired result. With UCP at hand, one may derive a remarkable Runge approximation for nonlocal equations, which was first observed by \cite[Theorem 1.3]{GSU20}. In addition, we will prove Runge approximation for a nonlocal wave equation in Proposition \ref{prop: runge}.

	\begin{proposition}[{Poincar\'e inequality (cf.~\cite[Lemma~5.4]{RZ-unbounded})}]\label{prop:Poincare ineq} Let $\Omega\subset\R^n$ be a bounded domain. For any $s\geq 0$, there exists $C>0$ such that
		\begin{equation}
			\label{eq: poincare ineq}
			\|u\|_{L^2(\Omega)}\leq C  \left\|(-\Delta)^{s/2}u  \right\|_{L^2(\R^n)}
		\end{equation}
		for all $u\in C_c^{\infty}(\Omega)$.
	\end{proposition}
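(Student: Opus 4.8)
The plan is to prove the inequality directly on the Fourier side, splitting into low and high frequencies; the boundedness of $\Omega$ enters only through the low-frequency part. The case $s=0$ is trivial since $(-\Delta)^0=\id$ and $\|u\|_{L^2(\Omega)}=\|u\|_{L^2(\R^n)}$ for $u\in C_c^\infty(\Omega)$, so assume $s>0$, fix $\rho>0$ with $\Omega\subset B_\rho$, and let $u\in C_c^\infty(\Omega)$.

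For the high frequencies I would use the elementary bound $\int_{|\xi|>\delta}|\widehat u(\xi)|^2\,d\xi\le\delta^{-2s}\int_{\R^n}|\xi|^{2s}|\widehat u(\xi)|^2\,d\xi$, which by Plancherel's theorem and \eqref{eq:fracLapFourDef} equals $(2\pi)^n\delta^{-2s}\|(-\Delta)^{s/2}u\|_{L^2(\R^n)}^2$, valid for every $\delta>0$. For the low frequencies, the key observation is that $\widehat u$ is smooth with $\widehat u(0)=\int_\Omega u\,dx$ and $\nabla\widehat u(\xi)=\widehat{(-\mathrm i x)u}(\xi)$; since $\Omega$ is bounded, Cauchy--Schwarz gives $|\widehat u(0)|\le|\Omega|^{1/2}\|u\|_{L^2(\R^n)}$ and $\|\nabla\widehat u\|_{L^\infty(\R^n)}\lesssim_n\rho\,|\Omega|^{1/2}\|u\|_{L^2(\R^n)}$, so a first-order Taylor expansion yields $\sup_{|\xi|\le\delta}|\widehat u(\xi)|\le C_1\|u\|_{L^2(\R^n)}$ for all $\delta\in(0,1]$, with $C_1$ depending only on $n$ and $\Omega$. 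Integrating over the ball $B_\delta$ then gives $\int_{|\xi|\le\delta}|\widehat u(\xi)|^2\,d\xi\le C_0\delta^n\|u\|_{L^2(\R^n)}^2$ with $C_0=C_0(n,\Omega)$.

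Combining the two estimates through $(2\pi)^n\|u\|_{L^2(\R^n)}^2=\int_{|\xi|\le\delta}|\widehat u|^2\,d\xi+\int_{|\xi|>\delta}|\widehat u|^2\,d\xi$, I would then choose $\delta=\delta_0\in(0,1]$ small enough (depending only on $n$ and $\Omega$) that $C_0\delta_0^n\le\frac12(2\pi)^n$, absorb the low-frequency contribution into the left-hand side, and conclude $\|u\|_{L^2(\R^n)}^2\le 2\delta_0^{-2s}\|(-\Delta)^{s/2}u\|_{L^2(\R^n)}^2$. Since $\|u\|_{L^2(\Omega)}=\|u\|_{L^2(\R^n)}$ for $u\in C_c^\infty(\Omega)$, this is \eqref{eq: poincare ineq} with $C=\sqrt2\,\delta_0^{-s}$.

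I do not expect any serious obstacle here; the only point requiring care is the low-frequency estimate, where it is essential that $\Omega$ has finite measure, so that $\widehat u$ is almost constant near the origin with value controlled by $\|u\|_{L^2}$ — exactly the mechanism that fails for, say, $\Omega=\R^n$. An alternative, non-quantitative route is a contradiction/compactness argument: a normalized sequence $u_k\in C_c^\infty(\Omega)$ with $\|(-\Delta)^{s/2}u_k\|_{L^2(\R^n)}\to 0$ is bounded in $H^s(\R^n)$ by \eqref{eq: equivalent norm on Hs} and supported in the fixed compact set $\overline\Omega$, hence converges along a subsequence to some $u$ strongly in $L^2(\R^n)$ by the Rellich--Kondrachov theorem; passing to the Fourier side yields $|\xi|^s\widehat u(\xi)=0$ a.e., so $u\equiv 0$, contradicting $\|u\|_{L^2(\R^n)}=1$. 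I would present the first, quantitative argument as the main proof.
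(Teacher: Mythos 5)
Your proof is correct. The paper does not give its own argument for this proposition---it only points to an external reference---so there is nothing in the paper to compare against; your high/low-frequency splitting on the Fourier side is the standard route to this fractional Poincar\'e inequality. One small simplification worth noting: the Taylor expansion (and hence the gradient bound on $\widehat u$) is unnecessary, because compact support alone already gives the uniform bound $\|\widehat u\|_{L^\infty(\R^n)}\leq\|u\|_{L^1(\Omega)}\leq|\Omega|^{1/2}\|u\|_{L^2(\Omega)}$ for every $\xi$, which immediately yields the low-frequency estimate $\int_{|\xi|\leq\delta}|\widehat u(\xi)|^2\,d\xi\leq|\Omega|\,|B_\delta|\,\|u\|_{L^2(\Omega)}^2$ without any appeal to smoothness of $\widehat u$ near the origin. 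The rest of the argument, including the normalization bookkeeping, the absorption step for small $\delta_0$, and the compactness alternative, is sound.
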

	
	Next we introduce some local variants of the above fractional Sobolev spaces. If $\Omega\subset \R^n$ is an open set, $F\subset \R^n$ a closed set and $s\in\mathbb{R}$,
	then we set
	\begin{align*}
		H^{s}(\Omega) & \vcentcolon =\left\{ u|_{\Omega} : \, u\in H^{s}(\mathbb{R}^{n})\right\},\\
		\widetilde{H}^{s}(\Omega) & \vcentcolon =\text{closure of \ensuremath{C_{c}^{\infty}(\Omega)} in \ensuremath{H^{s}(\mathbb{R}^{n})}},\\
		H_F^s&\vcentcolon =\{u\in H^s(\R^n)\,;\,\supp(u)\subset F\}.
	\end{align*}
	Meanwhile, $H^{s}(\Omega)$ is a Banach space with respect to the quotient norm
	\[
	\|u\|_{H^{s}(\Omega)}\vcentcolon =\inf\left\{ \|U\|_{H^{s}(\mathbb{R}^{n})} : \,  U\in H^{s}(\mathbb{R}^{n})\mbox{ and }U|_{\Omega}=u\right\}.
	\]
	Hence, using the fact that \eqref{eq: equivalent norm on Hs} is an equivalent norm on $\widetilde{H}^s(\Omega)$, Propositions~\ref{prop:Poincare ineq} and the density of $C_c^{\infty}(\Omega)$ in $\widetilde{H}^s(\Omega)$, we have:
	\begin{lemma}
		\label{lemma: equivalent norm on tilde spaces}
		Let $\Omega\subset\R^n$ be a bounded domain and $s\geq 0$. Then an equivalent norm on $\widetilde{H}^s(\Omega)$ is given by
		\begin{equation}
			\label{eq: equivalent norm on tilde spaces}
			\|u\|_{\widetilde{H}^s(\Omega)}=\left\|(-\Delta)^{s/2}u \right\|_{L^2(\R^n)}.
		\end{equation}
	\end{lemma}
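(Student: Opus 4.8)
The plan is to establish the two-sided bound
\[
c\,\|u\|_{H^s(\R^n)} \;\le\; \left\|(-\Delta)^{s/2}u\right\|_{L^2(\R^n)} \;\le\; C\,\|u\|_{H^s(\R^n)}
\]
for all $u\in\widetilde H^s(\Omega)$, since this is exactly the statement that \eqref{eq: equivalent norm on tilde spaces} defines an equivalent norm. The upper bound is the easy direction: the operator $(-\Delta)^{s/2}\colon H^s(\R^n)\to L^2(\R^n)$ is bounded (it is the Fourier multiplier with symbol $|\xi|^s\le\langle\xi\rangle^s$), or equivalently one reads it off directly from the equivalent norm \eqref{eq: equivalent norm on Hs}. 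This inequality holds on all of $H^s(\R^n)$, in particular on the closed subspace $\widetilde H^s(\Omega)$.

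For the lower bound I would first argue on test functions and then use density. Let $u\in C_c^\infty(\Omega)$. Since $\supp u\subset\overline\Omega$ we have $\|u\|_{L^2(\R^n)}=\|u\|_{L^2(\Omega)}$, so the Poincar\'e inequality of Proposition~\ref{prop:Poincare ineq} gives $\|u\|_{L^2(\R^n)}\le C\|(-\Delta)^{s/2}u\|_{L^2(\R^n)}$. Combining this with the norm equivalence \eqref{eq: equivalent norm on Hs} yields
\[
\|u\|_{H^s(\R^n)} \;\le\; C\,\|u\|^*_{H^s(\R^n)} \;=\; C\left(\|u\|_{L^2(\R^n)}+\left\|(-\Delta)^{s/2}u\right\|_{L^2(\R^n)}\right) \;\le\; C'\left\|(-\Delta)^{s/2}u\right\|_{L^2(\R^n)}.
\]
To extend this to a general $u\in\widetilde H^s(\Omega)$, choose $u_k\in C_c^\infty(\Omega)$ with $u_k\to u$ in $H^s(\R^n)$; by boundedness of $(-\Delta)^{s/2}$ we get $(-\Delta)^{s/2}u_k\to(-\Delta)^{s/2}u$ in $L^2(\R^n)$, hence both sides of the displayed inequality for $u_k$ converge to the corresponding expressions for $u$, and the inequality persists in the limit. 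Together with the upper bound, this proves the equivalence.

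I do not expect a genuine obstacle in this lemma; it is a direct packaging of Proposition~\ref{prop:Poincare ineq}, the norm equivalence \eqref{eq: equivalent norm on Hs}, and the definition of $\widetilde H^s(\Omega)$ as the $H^s(\R^n)$-closure of $C_c^\infty(\Omega)$. The only point needing a modicum of care is the final limiting step, where one must invoke the $H^s(\R^n)\to L^2(\R^n)$ continuity of $(-\Delta)^{s/2}$ so that the Poincar\'e estimate, which a priori is only available for $C_c^\infty(\Omega)$ functions, survives passage to the closure.
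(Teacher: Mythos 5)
Your proof is correct and follows exactly the route the paper indicates (the paper does not write out a proof, only remarks that the lemma follows from the equivalent norm \eqref{eq: equivalent norm on Hs}, the Poincar\'e inequality of Proposition~\ref{prop:Poincare ineq}, and density of $C_c^\infty(\Omega)$ in $\widetilde H^s(\Omega)$). You have simply spelled out those three ingredients in the expected way, including the necessary density/continuity step to pass from test functions to all of $\widetilde H^s(\Omega)$.
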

	The observation of Lemma~\ref{lemma: equivalent norm on tilde spaces} will be of constant use in the well-posedness theorem below.
	
	\subsection{Bochner spaces}
	\label{subsec: Bochner spaces}
	Next, we introduce some standard function spaces for time-dependent PDEs adapted to the nonlocal setting considered in this article. First if $X$ is a given Banach space and $(a,b)\subset\R$, then we let $C^k([a,b]\,;X)$, $L^p(a,b\,;X)$ ($k\in\N,1\leq p<\infty$) stand for the space of $k$-times continuously differentiable functions and the space of measurable functions $u\colon (a,b)\to X$ such that $t\mapsto \|u(t)\|_X\in L^p([a,b])$ (with the usual modification for $p=\infty$). These spaces carry the norms
	\begin{equation}
		\label{eq: Bochner spaces}
		\begin{split}
			\|u\|_{L^p(a,b\,;X)}&\vcentcolon = \left(\int_a^b\|u(t)\|_{X}^p\,dt\right)^{1/p}<\infty,\\
			\|u\|_{C^k([a,b];X)}&\vcentcolon = \sup_{0\leq \ell\leq k}\|\partial_t^{\ell}u\|_{L^{\infty}(a,b;X)}.
		\end{split}
	\end{equation} 
	
	Additionally, whenever $u\in L^1_{\loc}(a,b\,;X)$ with $X$ being a space of functions over a subset of some euclidean space, such as $L^2(\Omega)$ or $H^s(\R^n)$, then $u$ is identified with a function $u(x,t)$ and $u(t)$ denotes the function $ x\mapsto u(x,t)$ for almost all $t$. This is justified by the fact, that any $u\in L^q(a,b\,;L^p(\Omega))$ with $1\leq q,p<\infty$ can be seen as a measurable function $u\colon \Omega\times (a,b)\to \R$ such that the norm $\|u\|_{L^q(a,b\,;L^p(\Omega))}$, as defined in \eqref{eq: Bochner spaces}, is finite. In particular, one has $L^p(0,T;L^p(\Omega))=L^p(\Omega_T)$ for $1\leq p<\infty$. Recall that we denote $A_T=A\times (0,T)$ for any set $A\subset\R^n$ and $T>0$. Clearly, a similar statement holds for the spaces $L^q(a,b\,;H^{s}(\R^n))$ and their local versions. Furthermore, the distributional derivative $\frac{du}{dt}\in \distr((a,b)\,;X)$ is identified with the derivative $\partial_tu\in \distr(\Omega\times (a,b))$ as long as it is well-defined. Here $ \distr((a,b)\,;X)$ stands for all continuous linear operators from $C_c^{\infty}((a,b))$ to $X$.
	
	\section{The forward problem of nonlinear nonlocal wave equations}
	\label{sec: well-posedness}
	The main purpose of this section is to establish the well-posedness of the semilinear nonlocal wave equation for certain nonlinearities $f(u)$. As a preliminary step we first show the well-posedness of the linear nonlocal wave equation with a potential $q$ only belonging to some $L^p$ space.
	
	\subsection{Well-posedness}
	
	Let us start by stating the well-posedness theorem for the linear nonlocal wave equation.
	
	\begin{theorem}[Well-posedness of linear nonlocal wave equation]
		\label{thm: well-posedness linear}
		Let $\Omega\subset\R^n$ be a bounded Lipschitz domain, $T>0$ and $s>0$ a non-integer. Suppose that $q\in L^p(\Omega)$, where $p$ satisfies
		\[
		\begin{cases}
			n/s\leq p\leq \infty, &\, \text{if }\, 2s< n,\\
			2<p\leq \infty,  &\, \text{if }\, 2s= n,\\
			2\leq p\leq \infty, &\, \text{if }\, 2s\geq n.
		\end{cases}
		\]
		Then for any pair $(u_0, u_1)\in \widetilde{H}^s(\Omega)\times L^2(\Omega)$ and $F\in L^2(\Omega_T)$ there exists a unique solution $u\in C([0,T];\widetilde{H}^s(\Omega))$ with $\partial_t u\in C([0,T];L^2(\Omega))$ of
		\begin{equation}
			\label{eq: well-posedness linear case}
			\begin{cases}
				\LC \partial_t^2 +(-\Delta)^s+q\RC u=F &\text{ in }\Omega_T,\\
				u=0     &\text{ in }(\Omega_e)_T,\\
				u(0)=u_0,\quad \partial_t u(0)=u_1  &\text{ in }\Omega.
			\end{cases}
		\end{equation}
		More concretely, this means that there holds
		\begin{equation}
			\label{eq: weak solution}
			\frac{d}{dt}\left\langle\partial_t u,v \right\rangle_{L^2(\Omega)}+ \left\langle (-\Delta)^{s/2}u,(-\Delta)^{s/2}v \right\rangle_{L^2(\R^n)}+\left\langle qu,v \right\rangle_{L^2(\Omega)}=\langle F,v\rangle_{L^2(\Omega)}
		\end{equation}
		for all $v\in \widetilde{H}^s(\Omega)$ in the sense of $\distr((0,T))$. Furthermore, $u$ satisfies the following energy identity
		\begin{equation}
			\label{eq: energy identity linear case}
			\begin{split}
				& \quad \,  \left\|\partial_t u(t) \right\|_{L^2(\Omega)}^2+ \left\|(-\Delta)^{s/2}u(t)\right\|_{L^2(\R^n)}^2+2 \left\langle q u,\partial_t u \right\rangle_{L^2(\Omega_t)}\\
				&= \left\|u_1 \right\|_{L^2(\Omega)}^2+\left\|(-\Delta)^{s/2}u_0 \right\|_{L^2(\R^n)}^2+2\left\langle F,\partial_t u \right\rangle_{L^2(\Omega_t)}
			\end{split}
		\end{equation}
		for all $t\in [0,T]$. Moreover, if $(u_{0,j},u_{1,j})\in \widetilde{H}^s(\Omega)\times L^2(\Omega)$, $F_j\in L^2(\Omega_T)$ and $u_j$ denotes the related unique solution to \eqref{eq: weak solution} for $j=1,2$, then the following continuity estimate holds
		\begin{equation}
			\label{eq: continuity estimate}
			\begin{split}
				&\quad \, \left\|u_1-u_2 \right\|_{L^{\infty}(0,T;\widetilde{H}^s(\Omega))}+ \left\|\partial_t u_1-\partial_t u_2 \right\|_{L^{\infty}(0,T;L^2(\Omega))}\\
				& \leq C\LC \left\|u_{0,1}-u_{0,2} \right\|_{\widetilde{H}^s(\Omega)}+\left\|u_{1,1}-u_{1,2} \right\|_{L^2(\Omega)}+ \left\|F_1-F_2 \right\|_{L^2(\Omega_T)}\RC, 
			\end{split}
		\end{equation}
		for some $C>0$ depending on $T>0$.
	\end{theorem}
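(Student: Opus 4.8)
The plan is to use the Galerkin method with the eigenfunctions of the fractional Laplacian. Let $(\phi_k)_{k\in\N}\subset \widetilde H^s(\Omega)$ be the $L^2(\Omega)$-orthonormal eigenfunctions of the Dirichlet fractional Laplacian on $\Omega$, i.e.\ $(-\Delta)^s\phi_k=\lambda_k\phi_k$ in $\Omega$ with $\phi_k=0$ in $\Omega_e$, where $0<\lambda_1\leq\lambda_2\leq\cdots\to\infty$; these form an orthogonal basis of both $L^2(\Omega)$ and $\widetilde H^s(\Omega)$, the latter by Lemma~\ref{lemma: equivalent norm on tilde spaces}. First I would fix $m\in\N$, write $V_m\vcentcolon=\mathrm{span}\{\phi_1,\dots,\phi_m\}$, and look for an approximate solution $u_m(t)=\sum_{k=1}^m d_k^m(t)\phi_k$ solving the finite-dimensional system obtained by testing \eqref{eq: weak solution} against each $\phi_k$, with initial data given by the $L^2(\Omega)$-projections of $u_0,u_1$ onto $V_m$. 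This is a linear second-order ODE system with continuous (indeed constant-in-time) coefficients, so it has a unique global solution $d^m\in C^2([0,T])$ by Picard--Lindel\"of.

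The next step is to derive the a priori energy estimate uniformly in $m$. Testing the Galerkin system with $\partial_t u_m$ (legitimate since $\partial_t u_m\in V_m$) gives
\begin{equation}
	\frac{1}{2}\frac{d}{dt}\LC \|\partial_t u_m(t)\|_{L^2(\Omega)}^2+\|(-\Delta)^{s/2}u_m(t)\|_{L^2(\R^n)}^2\RC=\langle F-qu_m,\partial_t u_m\rangle_{L^2(\Omega)}.
\end{equation}
The term $\langle F,\partial_t u_m\rangle$ is bounded by $\frac12\|F\|_{L^2(\Omega)}^2+\frac12\|\partial_t u_m\|_{L^2(\Omega)}^2$. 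For the potential term one writes $|\langle qu_m,\partial_t u_m\rangle_{L^2(\Omega)}|\leq \|q\|_{L^p(\Omega)}\|u_m\|_{L^{p'}(\Omega)}\|\partial_t u_m\|_{L^2(\Omega)}$, uh, more precisely one splits the exponents so that $u_m$ sits in the correct Lebesgue space: by the fractional Sobolev embedding $\widetilde H^s(\Omega)\hookrightarrow L^{q^*}(\Omega)$ with $q^*=\tfrac{2n}{n-2s}$ when $2s<n$ (and any exponent when $2s\geq n$), the assumed range of $p$ guarantees $\|u_m\|_{L^{2p/(p-2)}(\Omega)}\lesssim \|(-\Delta)^{s/2}u_m\|_{L^2(\R^n)}$ via Poincar\'e (Proposition~\ref{prop:Poincare ineq}), so that $|\langle qu_m,\partial_t u_m\rangle|\leq C\|q\|_{L^p(\Omega)}\|(-\Delta)^{s/2}u_m\|_{L^2(\R^n)}\|\partial_t u_m\|_{L^2(\Omega)}$, which is absorbed by Young's inequality. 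Setting $E_m(t)\vcentcolon=\|\partial_t u_m(t)\|_{L^2(\Omega)}^2+\|(-\Delta)^{s/2}u_m(t)\|_{L^2(\R^n)}^2$, Gr\"onwall's inequality yields $\sup_{[0,T]}E_m(t)\leq C(T)\LC \|u_1\|_{L^2(\Omega)}^2+\|(-\Delta)^{s/2}u_0\|_{L^2(\R^n)}^2+\|F\|_{L^2(\Omega_T)}^2\RC$, uniformly in $m$, using that the projected initial data have norms bounded by those of $u_0,u_1$.

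Armed with this bound, $(u_m)$ is bounded in $L^\infty(0,T;\widetilde H^s(\Omega))$ and $(\partial_t u_m)$ is bounded in $L^\infty(0,T;L^2(\Omega))$; passing to a subsequence we obtain weak-$\ast$ limits $u\in L^\infty(0,T;\widetilde H^s(\Omega))$ with $\partial_t u\in L^\infty(0,T;L^2(\Omega))$. A standard argument (test against $\psi(t)\phi_k$ with $\psi\in C_c^\infty((0,T))$, pass to the limit in each linear term using weak convergence, then use density of $\bigcup_m V_m$ in $\widetilde H^s(\Omega)$) shows $u$ satisfies \eqref{eq: weak solution}; from that identity $\frac{d}{dt}\langle\partial_t u,v\rangle\in L^2(0,T)$ for each fixed $v$, and a duality/regularity argument (or interpolation of $\partial_t u\in L^\infty(0,T;L^2)$ with $\partial_t^2 u\in L^2(0,T;H^{-s}(\Omega))$) upgrades the weak-$\ast$ continuity to $u\in C([0,T];\widetilde H^s(\Omega))$, $\partial_t u\in C([0,T];L^2(\Omega))$, after which the initial conditions $u(0)=u_0$, $\partial_t u(0)=u_1$ are attained in the classical sense. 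I would also record that the same energy estimate passes to the limit, giving the bound for $u$.

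For the energy identity \eqref{eq: energy identity linear case}, the cleanest route is to note it holds exactly at the Galerkin level (integrate the displayed identity above from $0$ to $t$), and then pass to the limit; the only delicate point is the convergence of the cross terms, which follows from strong convergence $u_m\to u$ in $C([0,T];L^2(\Omega))$ (obtained from Aubin--Lions, since $\widetilde H^s(\Omega)\hookrightarrow\hookrightarrow L^2(\Omega)$ and $\partial_t u_m$ is bounded in $L^2$) together with weak convergence of $\partial_t u_m$. Alternatively one formally tests \eqref{eq: weak solution} with $v=\partial_t u(t)$, which is not directly admissible because $\partial_t u$ has insufficient regularity in time — this is exactly the low-regularity subtlety flagged in the introduction — so I would instead justify it by a Steklov/time-mollification argument in the spirit of \cite{LionsMagenesVol1}, which is the route the paper alludes to in Claim~\ref{claim: integration by parts}. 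Uniqueness and the continuity estimate \eqref{eq: continuity estimate} both follow from linearity: if $u=u_1-u_2$ solves the equation with data $(u_{0,1}-u_{0,2},u_{1,1}-u_{1,2},F_1-F_2)$, then applying the energy identity (or just the a priori Gr\"onwall estimate) to $u$ gives \eqref{eq: continuity estimate}, and taking zero data forces $u\equiv 0$.

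The main obstacle I anticipate is the rigorous justification of the energy identity \eqref{eq: energy identity linear case}: because solutions only have $\partial_t u\in L^\infty(0,T;L^2(\Omega))$ and $\partial_t^2 u\in L^2(0,T;H^{-s}(\Omega))$, the pairing $\langle\partial_t^2 u,\partial_t u\rangle$ is not a priori meaningful, so one cannot simply multiply by $\partial_t u$ and integrate by parts; the fix is either to obtain the identity at the finite-dimensional Galerkin level and pass to the limit (needing the compactness of $u_m\to u$ in $C([0,T];L^2)$ for the $q$-term), or to use a careful time-regularization. A secondary technical point is verifying that the stated range of $p$ is exactly what makes the potential term $\langle qu,\partial_t u\rangle$ controllable by the energy, which is a direct but slightly case-heavy application of fractional Sobolev embeddings (with the borderline case $2s=n$ requiring $p>2$ rather than $p=2$).
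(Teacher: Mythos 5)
Your argument is essentially sound and leads to the same result, but you take a different route from the paper. The paper does not run the Galerkin scheme by hand: after recording the key bilinear-form estimate
\begin{equation}
	\left|\left\langle qu,v\right\rangle_{L^2(\Omega)}\right|\leq C\|q\|_{L^p(\Omega)}\left\|(-\Delta)^{s/2}u\right\|_{L^2(\R^n)}\|v\|_{L^2(\Omega)},
\end{equation}
which is exactly your estimate controlling the potential term, it verifies that the triple $V=\widetilde H^s(\Omega)$, $H=L^2(\Omega)$, $V'=(\widetilde H^s(\Omega))'$ together with the forms $a_0(u,v)=\langle(-\Delta)^{s/2}u,(-\Delta)^{s/2}v\rangle$, $a_1(u,v)=\langle qu,v\rangle$ satisfies the abstract hypotheses of Dautray--Lions (Vol.~5, Chapter~XVIII, \S5, Sections~1.1, 1.2, 5.2), and then simply invokes their Theorems~3--4 for existence, uniqueness and $C([0,T];V)\cap C^1([0,T];H)$ regularity, and Lemma~7 for the energy identity. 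The continuity estimate is then obtained as you do, by applying the energy identity to a difference of solutions and closing with Cauchy--Schwarz, Young, and Gr\"onwall.

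What your direct Galerkin approach buys is self-containedness; what the paper's route buys is that the most delicate piece — the energy \emph{identity} \eqref{eq: energy identity linear case}, as opposed to a mere inequality — is taken off the shelf. You correctly flag this as the weak spot of the hands-on argument: passing to the weak-$\ast$ limit in the Galerkin energy balance only gives an inequality by lower semicontinuity, and the Aubin--Lions compactness you invoke ($u_m\to u$ in $C([0,T];L^2(\Omega))$) is not by itself enough to convert the two quadratic terms $\|\partial_t u_m(t)\|_{L^2}^2$ and $\|(-\Delta)^{s/2}u_m(t)\|_{L^2}^2$ into equalities in the limit; moreover the cross term $\langle qu_m,\partial_t u_m\rangle_{L^2(\Omega_t)}$ requires strong convergence of $u_m$ in $L^{2p/(p-2)}$, which at the borderline $p=n/s$ (with $2s<n$) does not follow from $L^2$-compactness plus $\widetilde H^s$-boundedness alone. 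So your ``Alternatively one formally tests with $\partial_t u$ and justifies by time-mollification'' is really the load-bearing argument, not an alternative; it is essentially what the cited Dautray--Lions lemma does. Everything else in your proposal (the Sobolev-embedding bookkeeping for the range of $p$, the Gr\"onwall closure, the uniqueness and continuity estimate by linearity) matches the paper and is correct.
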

	\begin{remark}
		\label{remark: nonhom linear problem}
		Notice that the previous well-posedness result can easily be used to construct a unique solution $u\in C([0,T];H^s(\R^n))$ with $\partial_tu\in C([0,T];L^2(\R^n))$ to the problem
		\begin{equation}
			\label{eq: well-posedness linear case non-hom}
			\begin{cases}
				\LC \partial_t^2 +(-\Delta)^s+q\RC u=f &\text{ in }\Omega_T,\\
				u=\varphi  &\text{ in }(\Omega_e)_T,\\
				u(0)=u_0,\quad \partial_t u(0)=u_1  &\text{ in } \Omega,
			\end{cases}
		\end{equation}
		whenever the initial conditions, force term, potential are as above and the exterior Dirichlet data $\varphi$ is sufficiently regular, say $\varphi\in C^2([0,T];H^{2s}(\R^n))$. Furthermore, \eqref{eq: continuity estimate} and the mapping properties of the fractional Laplacian directly imply the continuity estimate
		\begin{equation}
			\label{eq: continuity estimate with non-zero ext cond}
			\begin{split}
				&\quad \, \|u_1-u_2\|_{L^{\infty}(0,T;H^s(\R^n))}+\left\|\partial_t u_1-\partial_t u_2 \right\|_{L^{\infty}(0,T;L^2(\R^n))}\\
				& \leq C\Big( \left\|u_{0,1}-u_{0,2}\right\|_{H^s(\R^n)}+\left\|u_{1,1}-u_{1,2}\right\|_{L^2(\R^n)}+\left\|F_1-F_2\right\|_{L^2(\Omega_T)}\\
				&\qquad \quad  + \left\|\varphi_1-\varphi_2 \right\|_{C^2([0,T];H^{2s}(\R^n))}\Big).
			\end{split}
		\end{equation}
		Here, $(u_{0,j},u_{1,j})\in H^s(\R^n)\times L^2(\R^n)$, $F_j\in L^2(\Omega)$, $\varphi_j\in C^2([0,T];\widetilde{H}^{2s}(\Omega_e))$ with $u_{0,j}-\varphi_j(0)\in\widetilde{H}^s(\Omega), u_{1,j}-\partial_t\varphi_j(0)\in L^2(\Omega)$ and $u_j$ denotes the related unique solution to
		\begin{equation}
			\label{eq: well-posedness linear case j}
			\begin{cases}
				\LC \partial_t^2 +(-\Delta)^s+q\RC u=F_j &\text{ in }\Omega_T,\\
				u=\varphi_j     &\text{ in }(\Omega_e)_T,\\
				u(0)=u_{0,j},\quad \partial_t u(0)=u_{1,j}  &\text{ in }\Omega.
			\end{cases}
		\end{equation}
	\end{remark}
	
	\begin{proof}
		Let us define
		\begin{equation}
			\label{eq: functional setup}
			V=\widetilde{H}^s(\Omega),\quad H=L^2(\Omega),\quad V'=(\widetilde{H}^s(\Omega))',
		\end{equation}
		where $V$ is endowed with the equivalent norm from Lemma~\ref{lemma: equivalent norm on tilde spaces},
		and
		\begin{equation}
			\label{eq: sesquilinear forms}
			a_0(u,v)=\left\langle (-\Delta)^{s/2}u,(-\Delta)^{s/2}v \right\rangle_{L^2(\R^n)},\quad a_1(u,v)= \langle qu,v\rangle_{L^2(\Omega)}.
		\end{equation}
		for $u,v\in V$. Observe that $a_0$ and $a_1$ are continuous sesquilinear forms on $\widetilde{H}^s(\Omega)$ and furthermore there holds
		\begin{equation}
			\label{eq: estimate a1}
			\left|\langle qu,v\rangle_{L^2(\Omega)} \right|\leq C\|q\|_{L^p(\Omega)}\|u\|_{\widetilde{H}^s(\Omega)}\|v\|_{L^2(\Omega)}
		\end{equation}
		for some $C>0$.  The case $p=\infty$ is clear.  In the case $\frac{n}{s}\leq p<\infty$ with $2s< n$ one can use H\"older's inequality with
		\[
		\frac{1}{2}=\frac{n-2s}{2n}+\frac{s}{n},
		\]
		$L^{r_2}(\Omega)\hookrightarrow L^{r_1}(\Omega)$ for $r_1\leq r_2$ as $\Omega\subset\R^n$ is bounded and Sobolev's inequality to obtain
		\begin{equation}
			\label{eq: computation for L2 estimate}
			\begin{split}
				\left|\langle qu,v\rangle_{L^2(\Omega)} \right|&\leq \|qu\|_{L^2(\Omega)}\|v\|_{L^2(\Omega)}\\
				&\leq \|q\|_{L^{n/s}(\Omega)}\|u\|_{L^{\frac{2n}{n-2s}}(\Omega)}\|v\|_{L^2(\Omega)}\\
				&\leq C\|q\|_{L^{n/s}(\Omega)}\|u\|_{L^{\frac{2n}{n-2s}}(\Omega)}\|v\|_{L^2(\Omega)}\\
				&\leq C\|q\|_{L^{p}(\Omega)}\left\|(-\Delta)^{s/2} u \right\|_{L^2(\R^n)}\|v\|_{L^2(\Omega)}
			\end{split}
		\end{equation}
		for all $u,v\in \widetilde{H}^s(\Omega)$.

		In the case $2s>n$ one can use the embedding $H^s(\R^n)\hookrightarrow L^{\infty}(\R^n)$ together with Lemma~\ref{lemma: equivalent norm on tilde spaces} and the boundedness of $\Omega$ to see that the estimate \eqref{eq: computation for L2 estimate} holds. In the case $n=2s$ one can use the boundedness of the embedding $\widetilde{H}^s(\Omega)\hookrightarrow L^{\bar{p}}(\Omega)$ for all $2\leq \bar{p}<\infty$, H\"older's inequality and the boundedness of $\Omega$ to get the estimate \eqref{eq: computation for L2 estimate}. In fact, the aforementioned embedding in the critical case follows by \cite{Ozawa} and the Poincar\'e inequality. Next observe that the spaces $V,H,V'$ and sesquilinear forms $a_0,a_1$ as given in \eqref{eq: functional setup},   \eqref{eq: sesquilinear forms}, respectively, satisfy the assumptions in \cite[Chapter~XVIII, \S 5, Sections 1.1,1.2,5.2]{DautrayLionsVol5}. Therefore, we can apply \cite[Chapter~XVIII, \S 5, Theorem 3 and 4]{DautrayLionsVol5} to deduce that there exists a unique solution $u$ of \eqref{eq: well-posedness linear case} in the sense that \eqref{eq: weak solution} holds. Note that a priori this solution can be complex, but since we always take real-valued data in our problems we know that $u$ is real-valued as well. Finally, the energy identity is nothing else than a rephrasing of \cite[Chapter~XVIII, \S 5, Lemma 7]{DautrayLionsVol5}. 
		
		The continuity estimate \eqref{eq: continuity estimate with non-zero ext cond} follows from the energy identity, since in this case we have
		\begin{equation}
			\begin{split}
				& \quad \, \left\|\partial_t u_1(t)-\p_t u_2(t) \right\|_{L^2(\Omega)}^2+ \left\|u_1(t)-u_2(t)\right\|_{\wt H^s(\Omega)}^2 \\
				&\quad \, +2 \left\langle q(u_1-u_2),\partial_t (u_1-u_2) \right\rangle_{L^2(\Omega_t)}\\
				&= \left\|u_{1,1}-u_{1,2} \right\|_{L^2(\Omega)}^2+\left\|u_{0,1}-u_{0,2} \right\|_{\wt H^s(\Omega)}^2\\
				&\quad \, +2\left\langle F_1-F_2,\partial_t (u_1-u_2)\right\rangle_{L^2(\Omega_t)}.
			\end{split}
		\end{equation}
		Using Cauchy-Schwarz, Young's inequality, and the estimate \eqref{eq: computation for L2 estimate} we obtain
		\begin{equation}
			\begin{split}
				& \quad \,  \left\|\p_t u_1(t)-\p_t u_2(t) \right\|_{L^2(\Omega)}^2+ \left\|u_1(t)-u_2(t)\right\|_{\wt H^s(\Omega)}^2 \\
				&\leq C\bigg(
				\left\|u_{1,1}-u_{1,2} \right\|_{L^2(\Omega)}^2+\left\|u_{0,1}-u_{0,2} \right\|_{\wt H^s(\Omega)}^2\\
				&\qquad \quad 
				+\Vert F_1-F_2\Vert_{L^2(\Omega_T)}^2
				+
				\int_0^t\Vert
				\p_t u_1(\rho)-\p_tu_2(\rho) \Vert_{L^2(\Omega)}^2\,d\rho \bigg).
			\end{split}
		\end{equation}
		Finally, Gr\"onwall's inequality and taking supermum over $[0,T]$ yield the claimed continuity property.
		Thus we can conclude the proof.
	\end{proof}
	
	Next, we move on to the nonlinear problem. We consider here slightly more general setting than in equation \eqref{eq: nonlinear wave equation}, containing also a nonlinear term $g(x,\p_t u)$. For this purpose let us specify more precisely the assumptions we make on the nonlinearities $f$ and $g$. We start by recalling the notion of a Carath\'eodory function.
	
	\begin{definition}
		Let $U\subset \R^n$ be an open set. We say that  $f\colon U\times\R\to\R$ is a \emph{Carath\'odory function}, if it has the following properties:
		\begin{enumerate}[(i)]
			\item $\tau\mapsto f(x,\tau)$ is continuous for a.e. $x\in U$,
			\item $x\mapsto f(x,\tau)$ is measurable for all $\tau \in\R$.
		\end{enumerate}
	\end{definition}
	
	\begin{assumption}\label{main assumptions on nonlinearities}
		Let $f\colon \Omega\times \R\to\R$ and $g\colon \Omega\times \R\to\R$ be two Carath\'eodory functions satisfying the following conditions:
		\begin{enumerate}[(i)]
			\item\label{prop f} $f$ has partial derivative $\partial_{\tau}f$, which is a Carath\'eodory function, and there exists $a\in L^p(\Omega)$ such that
			\begin{equation}
				\label{eq: bound on derivative}
				\left|\partial_\tau f(x,\tau)\right|\lesssim a(x)+|\tau|^r
			\end{equation}
			for all $\tau\in\R$ and a.e. $x\in\Omega$\footnote{The symbols $\lesssim$ denotes the inequality holds up to a positive constant whose value is irrelevant for our arguments.}. Here the exponents $p$ and $r$ satisfy the restrictions
			\begin{equation}\label{eq: cond on p}
				\begin{cases}
					n/s\leq p\leq \infty, &\, \text{if }\, 2s< n,\\
					2<p\leq \infty,  &\, \text{if }\, 2s= n,\\
					2\leq p\leq \infty, &\, \text{if }\, 2s\geq n,
				\end{cases}
			\end{equation} 
			and
			\begin{equation}
				\label{eq: cond on r}
				\begin{cases}
					0\leq r<\infty, &\, \text{if }\, 2s\geq n,\\
					0\leq r\leq \frac{2s}{n-2s}, &\, \text{if }\, 2s< n,
				\end{cases}
			\end{equation}
			respectively. Moreover, $f$ fulfills the integrability condition $f(\cdot,0)\in L^2(\Omega)$.
			\item\label{prop F} There is a constant $C_1>0$ such that the function $F\colon \Omega\times\R\to\R$ defined via
			\[
			F(x,\tau)=\int_0^\tau f(x,\rho)\,d\rho
			\]
			satisfies $F(x,\tau)\geq -C_1$ for all $\tau\in\R$ and $x\in\Omega$.
			\item\label{prop g} $g$ is uniformly Lipschitz continuous in $x\in\Omega$ with $g(\cdot,0)\in L^2(\Omega)$.

		\end{enumerate}
	\end{assumption}
	
	\begin{remark}
		Let $q\in L^\infty(\Omega)$ be a non-negative function. An example of a nonlinearity $f$, which satisfies the conditions in Assumption~\ref{main assumptions on nonlinearities} is given by a fractional power type nonlinearity $f(x,\tau)=q(x)|\tau|^r\tau$ for $r\geq 0$, which satisfies \eqref{eq: cond on r}. The regularity conditions are clearly fulfilled. Moreover, one easily checks that there holds
		\begin{equation}\label{eq: f-power 1}
			\p_\tau f(x,\tau)=q(x)(r+1)|\tau|^r
		\end{equation}
		and 
		\begin{equation}\label{eq: f-power 2}
			\frac{d}{d\tau}|\tau|^{r+2}=(r+2)|\tau|^r \tau
		\end{equation}
		for $\tau\neq 0$. It is not hard to see that the first identity \eqref{eq: f-power 1} implies \eqref{eq: bound on derivative}. On the other hand, from the second identity \eqref{eq: f-power 2}, we can get
		\[
		\begin{split}
			F(x,\tau)&=q(x)\int_0^{\tau}|\rho|^{r}\rho\,d\rho=\frac{q(x)}{r+2}\int_0^{\tau}\frac{d}{d\rho}|\rho|^{r+2}\,d\rho=q(x)\frac{|\tau|^{r+2}}{r+2}\geq 0
		\end{split}
		\]
		and hence the condition \ref{prop F} is satisfied as well. We also point out the condition \eqref{eq: cond on p} of the exponent $p$ satisfies the assumption in Theorem \ref{thm: well-posedness linear}.
	\end{remark}
	
	\begin{theorem}[Well-posedness of nonlinear nonlocal wave equation]
		\label{thm: well-posedness nonlinear}
		Let $\Omega\subset\R^n$ be a bounded Lipschitz domain, $T>0$ and $s>0$ a non-integer. Suppose that $q,f,F$ and $g$ satisfy Assumption~\ref{main assumptions on nonlinearities}. Then for any pair $(u_0,u_1)\in \widetilde{H}^s(\Omega)\times L^2(\Omega)$ and $h\in L^2(\Omega_T)$ there exists a unique solution 
		\begin{equation}
			u\in C([0,T];\widetilde{H}^s(\Omega)) \quad \text{and}\quad \partial_t u\in C([0,T];L^2(\Omega))
		\end{equation} 
		of
		\begin{equation}
			\label{eq: well-posedness nonlinear case}
			\begin{cases}
				\partial_t^2u +(-\Delta)^su+f(x,u)+g(x,\partial_t u)=h &\text{ in }\Omega_T,\\
				u=0&\text{ in }(\Omega_e)_T,\\
				u(0)=u_0,\quad \partial_t u(0)=u_1  &\text{ in }\Omega.
			\end{cases}
		\end{equation}
		More concretely, this means that there holds
		\begin{equation}
			\label{eq: weak solution nonlin}
			\begin{split}
				&\frac{d}{dt}\left\langle\partial_t u,v\right\rangle_{L^2(\Omega)}+\left\langle (-\Delta)^{s/2}u,(-\Delta)^{s/2}v\right\rangle_{L^2(\R^n)}\\
				&\quad+\langle f(\cdot,u),v\rangle_{L^2(\Omega)}+\left\langle g(\cdot,\partial_t u),v \right\rangle_{L^2(\Omega)}=\langle h,v\rangle_{L^2(\Omega)}
			\end{split}
		\end{equation}
		for all $v\in \widetilde{H}^s(\Omega)$ in the sense of $\distr((0,T))$. Furthermore, $u$ satisfies the following energy identity
		\begin{equation}
			\label{eq: energy identity nonlinear case}
			\begin{split}
				&\quad\,  \left\|\partial_t u(t)\right\|_{L^2(\Omega)}^2+\left\|(-\Delta)^{s/2}u(t)\right\|_{L^2(\R^n)}^2 \\
				&\quad \, +2\int_{\Omega}F(x,u(t))dx+2\left\langle g(\cdot,\partial_tu),\partial_t u\right\rangle_{L^2(\Omega_t)}\\
				&= \|u_1\|_{L^2(\Omega)}^2+\left\|(-\Delta)^{s/2}u_0\right\|_{L^2(\R^n)}^2+2\left\langle h,\partial_t u\right\rangle_{L^2(\Omega_t)}+2\int_{\Omega}F(x,u_0)\,dx
			\end{split}
		\end{equation}
		for all $t\in [0,T]$.
	\end{theorem}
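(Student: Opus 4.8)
I would construct the solution by a contraction argument built on the linear theory of Theorem~\ref{thm: well-posedness linear}, and then globalize it by means of the energy identity \eqref{eq: energy identity nonlinear case} together with the one-sided bound $F(x,\tau)\geq -C_1$. (A Galerkin scheme, or a vanishing-viscosity approximation from \eqref{eq: viscous wave eq}, would also work, but the fixed-point argument is the most economical given Theorem~\ref{thm: well-posedness linear}.) Concretely: fix a small $T'\in(0,T]$ and let $\mathcal{X}_{T'}$ be the complete metric space of all $w\in C([0,T'];\widetilde H^s(\Omega))$ with $\partial_t w\in C([0,T'];L^2(\Omega))$, $w(0)=u_0$ and $\partial_t w(0)=u_1$, endowed with $d(w,\widetilde w)=\|w-\widetilde w\|_{C([0,T'];\widetilde H^s(\Omega))}+\|\partial_t w-\partial_t\widetilde w\|_{C([0,T'];L^2(\Omega))}$. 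For $w\in\mathcal{X}_{T'}$ let $\Phi(w)$ be the unique solution of the linear problem \eqref{eq: well-posedness linear case} (with vanishing potential) with force $h-f(\cdot,w)-g(\cdot,\partial_t w)$ and initial data $(u_0,u_1)$, furnished by Theorem~\ref{thm: well-posedness linear}. This is well-defined because the force lies in $L^2(\Omega_{T'})$: integrating \eqref{eq: bound on derivative} gives $|f(x,\tau)|\lesssim|f(x,0)|+a(x)|\tau|+|\tau|^{r+1}$, so by the estimate \eqref{eq: computation for L2 estimate} and the Sobolev embedding (which under \eqref{eq: cond on r} yields $\widetilde H^s(\Omega)\hookrightarrow L^{2(r+1)}(\Omega)$) one has $\|f(\cdot,w(t))\|_{L^2(\Omega)}\lesssim\|f(\cdot,0)\|_{L^2(\Omega)}+\|a\|_{L^p(\Omega)}\|w(t)\|_{\widetilde H^s(\Omega)}+\|w(t)\|_{\widetilde H^s(\Omega)}^{r+1}$, while $\|g(\cdot,\partial_t w(t))\|_{L^2(\Omega)}\leq\|g(\cdot,0)\|_{L^2(\Omega)}+L\|\partial_t w(t)\|_{L^2(\Omega)}$ by the Lipschitz condition on $g$ in Assumption~\ref{main assumptions on nonlinearities}.

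\textbf{Local existence.} Comparing $\Phi(w)$ with the zero solution in the continuity estimate \eqref{eq: continuity estimate} (applied on $[0,T']$, whose constant is dominated by its value for the horizon $T$) and using the bounds above with a factor $\sqrt{T'}$ from the time integration, I would first fix a radius $R$ depending only on $\|u_0\|_{\widetilde H^s(\Omega)}$, $\|u_1\|_{L^2(\Omega)}$, $\|h\|_{L^2(\Omega_T)}$ and the structural constants, and then choose $T'$ small so that $\Phi$ maps the ball $\{d(\cdot,0)\leq R\}$ into itself. For the contraction I would write $f(x,w)-f(x,\widetilde w)=\big(\int_0^1\partial_\tau f(x,\widetilde w+\theta(w-\widetilde w))\,d\theta\big)(w-\widetilde w)$, so that \eqref{eq: bound on derivative}, H\"older's inequality and again $\widetilde H^s(\Omega)\hookrightarrow L^{2(r+1)}(\Omega)$ (this is where \eqref{eq: cond on r} is used in an essential way) give $\|f(\cdot,w(t))-f(\cdot,\widetilde w(t))\|_{L^2(\Omega)}\lesssim(\|a\|_{L^p(\Omega)}+R^r)\|w(t)-\widetilde w(t)\|_{\widetilde H^s(\Omega)}$; combined with $\|g(\cdot,\partial_t w)-g(\cdot,\partial_t\widetilde w)\|_{L^2(\Omega)}\leq L\|\partial_t w-\partial_t\widetilde w\|_{L^2(\Omega)}$ and \eqref{eq: continuity estimate}, this yields $d(\Phi(w),\Phi(\widetilde w))\leq C\sqrt{T'}\,d(w,\widetilde w)$. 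Shrinking $T'$ once more makes $\Phi$ a contraction, and Banach's fixed point theorem produces a unique solution on $[0,T']$. Crucially, the admissible $T'$ depends only on an upper bound for $\|u_0\|_{\widetilde H^s(\Omega)}+\|u_1\|_{L^2(\Omega)}+\|h\|_{L^2(\Omega_T)}$ and on the fixed data.

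\textbf{Energy identity, a priori bound, globalization and uniqueness.} For the local solution $u$ I would derive \eqref{eq: energy identity nonlinear case} by applying the linear energy identity \eqref{eq: energy identity linear case} to the linear problem solved by $u$ (with force $h-f(\cdot,u)-g(\cdot,\partial_t u)$) and rewriting $2\langle f(\cdot,u),\partial_t u\rangle_{L^2(\Omega_t)}=2\int_\Omega F(x,u(t))\,dx-2\int_\Omega F(x,u_0)\,dx$ through the chain rule $\frac{d}{dt}\int_\Omega F(x,u(t))\,dx=\int_\Omega f(x,u(t))\partial_t u(t)\,dx$ (Claim~\ref{claim: integration by parts}). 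From \eqref{eq: energy identity nonlinear case}, the one-sided bound $F(x,\tau)\geq -C_1$, the elementary inequality $g(x,\tau)\tau\geq -|g(x,0)||\tau|-L\tau^2$, Young's inequality in the term $\langle h,\partial_t u\rangle_{L^2(\Omega_t)}$, and the finiteness of $\int_\Omega F(x,u_0)\,dx$ (controlled via the growth of $F$ and $u_0\in\widetilde H^s(\Omega)$), one obtains $E(t):=\|\partial_t u(t)\|_{L^2(\Omega)}^2+\|(-\Delta)^{s/2}u(t)\|_{L^2(\R^n)}^2\leq C_0+C\int_0^t E(\rho)\,d\rho$, hence $E(t)\leq C_0e^{CT}$ by Gr\"onwall, a bound depending only on $T$ and the data; here Lemma~\ref{lemma: equivalent norm on tilde spaces} lets one pass between $\|\cdot\|_{\widetilde H^s(\Omega)}$ and $\|(-\Delta)^{s/2}\cdot\|_{L^2(\R^n)}$. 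Since the local existence time depends only on such a bound, a standard continuation argument extends $u$ to all of $[0,T]$, and \eqref{eq: energy identity nonlinear case} then holds on $[0,T]$. Uniqueness is obtained along the same lines: the difference $\bar w=u-\widetilde u$ of two solutions solves the linear problem with zero initial data and force $-(f(\cdot,u)-f(\cdot,\widetilde u))-(g(\cdot,\partial_t u)-g(\cdot,\partial_t\widetilde u))$, so \eqref{eq: continuity estimate}, the difference estimates above (with $R$ replaced by the a priori bound on $\|u\|_{\widetilde H^s(\Omega)},\|\widetilde u\|_{\widetilde H^s(\Omega)}$) and Gr\"onwall applied to $t\mapsto\|\bar w\|_{L^\infty(0,t;\widetilde H^s(\Omega))}^2+\|\partial_t\bar w\|_{L^\infty(0,t;L^2(\Omega))}^2$ force $\bar w\equiv0$.

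\textbf{Main obstacle.} I expect the delicate point to be the rigorous justification of the chain rule and hence of the energy identity: that $t\mapsto\int_\Omega F(x,u(t))\,dx$ is absolutely continuous with derivative $\int_\Omega f(x,u(t))\partial_t u(t)\,dx$. The solution has only the regularity $u\in C([0,T];\widetilde H^s(\Omega))$, $\partial_t u\in C([0,T];L^2(\Omega))$ — so $\partial_t^2 u$ lives merely in a negative-order Bochner space — and one needs continuity of the Nemytskii operator $v\mapsto f(\cdot,v)$ in function spaces compatible with the precise ranges \eqref{eq: cond on p}--\eqref{eq: cond on r}; this is handled by an approximation/density argument (Claim~\ref{claim: integration by parts}, cf.~\cite{LionsMagenesVol1}). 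Once this is in place, the remaining ingredients — the contraction, the Sobolev estimates, and the Gr\"onwall arguments — are routine.
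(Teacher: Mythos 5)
Your strategy---Banach fixed point via Theorem~\ref{thm: well-posedness linear} for local existence, then the energy identity combined with the one-sided bound $F\geq-C_1$ for a priori control, and continuation to globalize---is essentially the same as the paper's proof. Two small points worth correcting: the chain rule for $t\mapsto\int_\Omega F(x,u(t))\,dx$ is justified in the paper not via Claim~\ref{claim: integration by parts} (which is the adjoint integration-by-parts identity used for the Runge approximation) but via a fiberwise application of \cite[Theorem~2.1.11]{Ziemer} after noting $u(x,\cdot)\in H^1(0,T')$ for a.e.\ $x\in\Omega$; and your Gr\"onwall argument for uniqueness of the local solution is a slightly more direct route than the open--closed connectedness argument the paper uses in its Step~3.
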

	
	\begin{proof}
		Let $\psi\in C([0,T];L^2(\Omega))$. First we show that $g(x,\psi)$, $f(\cdot,\psi)$ are in $L^2(\Omega_T)$. As $g$ is a Carath\'eodory function the composition $g(x,\psi(x,t))$ is measurable for every $t\in [0,T]$. Additionally, the Lipschitz continuity of $g(x,\cdot)$ directly implies that 
		\begin{equation}
			\label{eq: comp with g}
			|g(x,\psi(x,t))|\leq C(|g(x,0)|+|\psi(x,t)|)
		\end{equation}
		and we have
		\begin{equation}
			\label{eq: L2 estimate comp with g}
			\|g(\cdot,\psi)\|_{L^2(\Omega_T)}\leq CT^{1/2}\LC \|g(\cdot,0)\|_{L^2(\Omega)}+\|\psi\|_{L^{\infty}(0,T;L^2(\Omega))}\RC,
		\end{equation}
		for some constant $C>0$. By the same argument as for $g$ the composition $f(x,\psi(x,t))$ is measurable for every $t\in [0,T]$. Moreover, by the fundamental theorem of calculus and Assumption~\ref{main assumptions on nonlinearities} we have
		\begin{equation}
			\label{eq:estimate on f}
			|f(x,s)-f(x,0)|\leq \left|\int_0^s \partial_{\tau}f(x,\tau)\,d\tau\right|\leq C\LC |a(x)||s|+|s|^{r+1}\RC 
		\end{equation}
		for a.e. $x\in\Omega$, $s\in\R$. This implies
		\[
		|f(x,\psi(x,t))|\leq C\LC |f(x,0)|+|a(x)||\psi(x,t)|+|\psi(x,t)|^{r+1}\RC.
		\]
		This guarantees that
		\begin{equation}
			\label{eq: L2 estimate f}
			\|f(\cdot, \psi(t))\|_{L^2(\Omega)}\leq C\LC \|f(\cdot,0)\|_{L^2(\Omega)}+\|a\psi(t)\|_{L^2(\Omega)}+\|\psi(t)\|_{L^{2(r+1)}(\Omega)}^{r+1}\RC,
		\end{equation}
		for $0\leq t\leq T$. From now on let us additionally assume that $\psi\in C([0,T];H^s(\R^n))$ and without loss of generality we can assume $r>0$. Then the computation in \eqref{eq: computation for L2 estimate} allows to estimate 
		\begin{equation}
			\label{eq: linear growth term}
			\|a\psi(t)\|_{L^2(\Omega)}\leq C\|a\|_{L^p(\Omega)}\left\|(-\Delta)^{s/2}\psi(t)\right\|_{L^2(\R^n)}.
		\end{equation}
		Next note that 
		\[
		\begin{cases}
			1\leq 1+r<\infty, &\, \text{if }\, 2s\geq n,\\
			1\leq 1+r\leq \frac{n}{n-2s} &\, \text{if }\, 2s< n.
		\end{cases}
		\]
		If $2s> n$, then the Sobolev embedding $H^s(\R^n)\hookrightarrow L^{\infty}(\R^n)$ implies
		\begin{equation}
			\label{eq: supercritical case}
			\|\psi(t)\|_{L^{2(r+1)}(\Omega)}^{r+1}\leq C\|\psi(t)\|_{H^s(\R^n)}^{r+1}.
		\end{equation}
		In the critical case $2s=n$, we can apply \cite{Ozawa} to obtain
		\begin{equation}
			\label{eq: critical case}
			\|\psi(t)\|_{L^{2(r+1)}(\Omega)}^{r+1}\leq C\|(-\Delta)^{s/2}\psi(t)\|_{L^2(\R^n)}^r\|\psi(t)\|_{L^2(\R^n)}.
		\end{equation}
		In the subcritical case $2s<n$, we apply the Hardy--Littlewood-Sobolev lemma to deduce
		\begin{equation}
			\label{eq: subcritical case}
			\|\psi(t)\|_{L^{2(r+1)}(\Omega)}^{r+1}\leq C\|\psi(t)\|_{L^{\frac{2n}{n-2s}}(\Omega)}^{r+1}\leq C \left\|(-\Delta)^{s/2}\psi(t) \right\|_{L^2(\R^n)}^{r+1}.
		\end{equation}
		As $\psi\in C([0,T];H^s(\R^n))$, we get by the continuity of the fractional Laplacian the estimate
		\begin{equation}
			\label{eq: estimate Hs}
			\begin{split}
				\|f(\cdot,\psi)\|_{L^2(\Omega_T)}
				\lesssim\, &T^{1/2}\bigg(\|f(\cdot,0)\|_{L^2(\Omega)}+\|a\|_{L^p(\Omega)}\|\psi\|_{L^{\infty}(0,T;H^s(\R^n))}\\
				& \qquad \quad +\|\psi\|^{r+1}_{L^{\infty}(0,T;H^s(\R^n))}\bigg).
			\end{split}
		\end{equation}
		Next, let us set 
		\begin{equation}
			\label{eq: size initial cond}
			A\vcentcolon = \max \LC \|u_0\|_{\widetilde{H}^s(\Omega)},\|u_1\|_{L^2(\Omega)}\RC, 
		\end{equation}
		and for constants $0<T_0\leq T$ and $R\geq A$, which will be fixed later. Consider the space 
		\begin{equation}
			\label{eq: solution space}
			X_{T_0,R}:=\left\{u\in C([0,T_0];\widetilde{H}^s(\Omega))\cap C^1([0,T_0];L^2(\Omega)) :\,  \|u\|_{T_0}\leq R \right\},
		\end{equation}
		where $\|\cdot\|_{T_0}$ is given by
		\[
		\|u\|_{T_0}:=\max\LC \|u\|_{L^{\infty}(0,T_0;\widetilde{H}^s(\Omega))}, \,  \left\|\partial_t u\right\|_{L^{\infty}(0,T_0;L^2(\Omega))}\RC.
		\]
		It is a known fact that under the given norm $C([0,T_0];\widetilde{H}^s(\Omega))\cap C^1([0,T_0];L^2(\Omega))$ becomes a Banach space and thus the same holds for $X_{T_0,R}$. Now, for given $v\in X_{T_0,R}$, let us consider the linear problem  
		\begin{equation}
			\label{eq: linear part of nonlinear problem}
			\begin{cases}
				\LC \partial_t^2 +(-\Delta)^s \RC u=h-f(x,v)-g(x,\partial_tv) &\text{ in }\Omega_{T_0},\\
				u=0 &\text{ in }(\Omega_e)_{T_0},\\
				u(0)=u_0,\quad \partial_t u(0)=u_1 &\text{ in } \Omega.
			\end{cases}
		\end{equation}
		Theorem~\ref{thm: well-posedness linear} yields the well-posedness of \eqref{eq: linear part of nonlinear problem}, that is, there exists a unique solution $u\in C([0,T_0];\widetilde{H}^s(\Omega))\cap C^1([0,T_0];L^2(\Omega))$ to \eqref{eq: linear part of nonlinear problem}, and we can define a map 
		\begin{equation}
			S\colon X_{T_0,R}\to C([0,T_0];\widetilde{H}^s(\Omega))\cap C^1([0,T_0];L^2(\Omega)),
		\end{equation} 
		which maps any $v\in X_{T_0,R}$ to its unique solution $S(v)$ of \eqref{eq: linear part of nonlinear problem}. The rest of the proof is divided into four steps.\\
		
		\noindent{\textit{Step 1.}} First we show that there exists $R_0\geq A$ and for all $R\geq R_0$ there exists $T_0=T_0(R_0)>0$ such that $S(X_{T_0,R})\subset X_{T_0,R}$. 
		
		\medskip
		
		By the energy identity of Theorem~\ref{thm: well-posedness linear}, H\"older's inequality and Young's inequality, there holds
		\[
		\begin{split}
			&\quad \, \left\|\partial_t u(t)\right\|_{L^2(\Omega)}^2+\left\|(-\Delta)^{s/2}u(t)\right\|_{L^2(\R^n)}^2\\
			&= \left\|u_1\right\|_{L^2(\Omega)}^2+\left\|(-\Delta)^{s/2}u_0 \right\|_{L^2(\R^n)}^2+2 \left\langle h-f(\cdot,v)-g(\cdot,\partial_tv),\partial_t u \right\rangle_{L^2(\Omega_t)}\\
			&\leq \left\|u_1\right\|_{L^2(\Omega)}^2+ \left\|(-\Delta)^{s/2}u_0\right\|_{L^2(\R^n)}^2 +\frac{1}{2} \left\|\partial_t u \right\|_{L^{\infty}(0,T_0;L^2(\Omega))}^2\\
			&\quad \, +2T_0 \left\|h-f(\cdot,v)-g(\cdot,\partial_tv)\right\|_{L^2(\Omega_{T_0})}^2\\
			&\leq \left\|u_1\right\|_{L^2(\Omega)}^2+\left\|(-\Delta)^{s/2}u_0 \right\|_{L^2(\R^n)}^2 +\frac{1}{2} \left\|\partial_t u\right\|_{L^{\infty}(0,T_0;L^2(\Omega))}^2\\
			&\quad  \, +C T_0\LC \|h\|_{L^2(\Omega_{T_0})}^2+\|f(\cdot,v)\|_{L^2(\Omega_{T_0})}^2+\left\|g(\cdot,\partial_tv)\right\|_{L^2(\Omega_{T_0})}^2\RC\\
		\end{split}
		\]
		for all $0\leq t\leq T_0$ and $\eps>0$. Taking the supremum over $[0,T_0]$, absorbing the third term on the left-hand side and using \eqref{eq: size initial cond} we get
		\[
		\begin{split}
			&\quad \, \left\|\partial_t u \right\|_{L^{\infty}(0,T_0;L^2(\Omega))}^2+ \left\|(-\Delta)^{s/2}u \right\|_{L^{\infty}(0,T_0;L^2(\R^n))}^2\\
			&\leq 4A^2 +C T_0\LC \|h\|_{L^2(\Omega_{T_0})}^2+\|f(\cdot,v)\|_{L^2(\Omega_{T_0})}^2+\left\|g(\cdot,\partial_tv)\right\|_{L^2(\Omega_{T_0})}^2\RC .
		\end{split}
		\]
		By \eqref{eq: L2 estimate comp with g}, \eqref{eq: estimate Hs}, Lemma~\ref{lemma: equivalent norm on tilde spaces} and the definition of the space $X_{T_0,R}$, we get
		\[
		\begin{split}
			&\quad \, \left\|\partial_t u\right\|_{L^{\infty}(0,T_0;L^2(\Omega))}^2+ \left\|(-\Delta)^{s/2}u \right\|_{L^{\infty}(0,T_0;L^2(\R^n))}^2\\
			&\leq 4A^2 +CT_0\|h\|_{L^2(\Omega_{T})}^2\\
			&\quad+C T_0^2\LC \|f(\cdot,0)\|^2_{L^2(\Omega)}+\|a\|^2_{L^p(\Omega)}R^2+R^{2(r+1)}+\|g(\cdot,0)\|_{L^2(\Omega)}^2+R^2\RC.
		\end{split}
		\]
		This implies
		\[
		\begin{split}
			\|u\|_{T_0}&\leq 2A +C_0T_0^{1/2}+ C_1T_0(1+R+R^{r+1}),
		\end{split}
		\]
		where $C_0,C_1>0$ are constants only depending on $n,s,\Omega$ and the norms $\|f(\cdot,0)\|_{L^2(\Omega)}$, $\|g(\cdot,0)\|_{L^2(\Omega)}$, $\|h\|_{L^2(\Omega_T)}$ as well as the constants appearing in Assumption~\ref{main assumptions on nonlinearities}. By choosing 
		\begin{equation}
			\label{eq: choosing parameters}
			R_0\geq 4A\quad \text{and}\quad T_0\leq \min\left(\frac{R^2}{4C_0},\frac{R}{4C_1(1+R+R^{r+1})}\right),
		\end{equation}
		we get $\|u\|\leq R$, whenever $R\geq R_0$. Note that $T_0$ only depends on $n,s,\Omega$, the functions $f,g,h$ and $A$ measuring the size of the initial conditions. 
		\\
		
		\noindent{\textit{Step 2.}} Next we prove that by making $T_0(R)>0$ possibly smaller, the map $S\colon X_{T_0,R}\to X_{T_0,R}$ is a (strict) contraction. 
		
		\medskip
		
		Assume that $R_0$ and $T_0$ are chosen as in \eqref{eq: choosing parameters}. Let us start by observing the estimate
		\begin{equation}
			\label{eq: estimate difference of f}
			|f(x,t)-f(x,s)|\leq C\LC |a(x)|+|t|^r+|s|^r\RC |t-s|
		\end{equation}
		for all $s,t\in\R$ and a.e. $x\in\Omega$. Let $u^j=S(v^j)\in C([0,T_0];\widetilde{H}^s(\Omega))\cap C^1([0,T_0];L^2(\Omega))$ for $j=1,2$. Then we may estimate
		\begin{equation}
			\label{eq: estimate difference in f}
			\begin{split}
				&\quad \, \left\|f(\cdot,v^1(t))-f(\cdot,v^2(t))\right\|_{L^2(\Omega)}\\
				&\leq C\LC \left\|a(v^1(t)-v^2(t))\right\|_{L^2(\Omega)}+ \left\|(|v^1(t)|^r+|v^2(t)|^r)|v^1(t)-v^2(t)|\right\|_{L^2(\Omega)}\RC\\
				&\leq C\LC \|a\|_{L^p(\Omega)}\left\|v^1(t)-v^2(t)\right\|_{\widetilde{H}^s(\Omega)}+ \left\|(|v^1(t)|^r+|v^2(t)|^r)|v^1(t)-v^2(t)|\right\|_{L^2(\Omega)}\RC,
			\end{split}
		\end{equation}
		for all $0\leq t\leq T_0$, where we used in the second inequality \eqref{eq: computation for L2 estimate} and \eqref{eq: estimate difference of f}. Next suppose that $2s<n$, then H\"older's inequality with
		\[
		\frac{1}{2}=\frac{n-2s}{2n}+\frac{s}{n}
		\]
		and the Sobolev embedding imply
		\begin{equation}
			\label{eq: prlim estimate for contraction}
			\begin{split}
				&\quad \, \left\|(|v^1(t)|^r+|v^2(t)|^r)|v^1(t)-v^2(t)|\right\|_{L^2(\Omega)})\\
				&\leq \left\||v^1(t)|^r+|v^2(t)|^r \right\|_{L^{n/s}(\Omega)}\|v^1(t)-v^2(t)\|_{L^{\frac{2n}{n-2s}}(\Omega)}\\
				&\leq C\LC \left\||v^1(t)|^r \right\|_{L^{n/s}(\Omega)}+\left\||v^2(t)|^r\right\|_{L^{n/s}(\Omega)}\RC \left\|v^1(t)-v^2(t)\right\|_{\widetilde{H}^s(\Omega)}.
			\end{split}
		\end{equation}
		If $r\geq s/n$, then $1\leq r\leq \frac{2n}{n-2s}$ and thus the boundedness of $\Omega$ as well as the Sobolev embedding ensure that
		\begin{equation}
			\label{eq: subcritical case with r > sn}
			\begin{split}
				& \quad \, \left\|(|v^1(t)|^r+|v^2(t)|^r)|v^1(t)-v^2(t)|\right\|_{L^2(\Omega)})\\
				&\leq C\LC \left\|v^1(t)\right\|_{L^{r n/s}(\Omega)}^r+ \left\|v^2(t) \right\|^r_{L^{rn/s}(\Omega)}\RC \left\|v^1(t)-v^2(t) \right\|_{\widetilde{H}^s(\Omega)}\\
				&\leq C\LC \left\|v^1(t)\right\|_{L^{\frac{2n}{n-2s}}(\Omega)}^r+\left\|v^2(t)\right\|^r_{L^{\frac{2n}{n-2s}}(\Omega)}\RC \left\|v^1(t)-v^2(t)\right\|_{\widetilde{H}^s(\Omega)}\\
				&\leq C\LC \left\|v^1(t) \right\|_{\widetilde{H}^s(\Omega)}^r+ \left\|v^2(t)\right\|^r_{\widetilde{H}^s(\Omega)}\RC \left\|v^1(t)-v^2(t) \right\|_{\widetilde{H}^s(\Omega)},
			\end{split}
		\end{equation}
		for some constant $C>0$ independent of $v^1$ and $v^2$.

		Next assume that $0\leq r<s/n$. For $r=0$ the previous estimate still holds and so we can assume $0<r<s/n$. In this situation, we choose $z\geq 1$ such that $1\leq rz\leq 2$. This implies $n/s< 1/r\leq z$ and thus \eqref{eq: prlim estimate for contraction}, H\"older's inequality and Poincar\'e's inequality ensure
		\[
		\begin{split}
			&\quad \, \left\|(|v^1(t)|^r+|v^2(t)|^r)|v^1(t)-v^2(t)|\right\|_{L^2(\Omega)})\\
			&\leq C\LC \left\||v^1(t)|^r \right\|_{L^{n/s}(\Omega)}+\left\||v^2(t)|^r\right\|_{L^{n/s}(\Omega)}\RC \left\|v^1(t)-v^2(t)\right\|_{\widetilde{H}^s(\Omega)}\\
			&\leq C\LC \left\||v^1(t)|^r\right\|_{L^{z}(\Omega)}+\left\||v^2(t)|^r\right\|_{L^{z}(\Omega)}\RC \left\|v^1(t)-v^2(t)\right\|_{\widetilde{H}^s(\Omega)}\\
			&\leq C\LC \left\|v^1(t)\right\|^r_{L^{rz}(\Omega)}+\left\|v^2(t) \right\|^r_{L^{rz}(\Omega)}\RC\left\|v^1(t)-v^2(t)\right\|_{\widetilde{H}^s(\Omega)}\\
			&\leq C\LC \left\|v^1(t)\right\|^r_{L^{2}(\Omega)}+\left\|v^2(t)\right\|^r_{L^{2}(\Omega)}\RC \left\|v^1(t)-v^2(t)\right\|_{\widetilde{H}^s(\Omega)}\\
			&\leq C\LC \left\|v^1(t)\right\|^r_{\widetilde{H}^s(\Omega)}+\left\|v^2(t)\right\|^r_{\widetilde{H}^s(\Omega)}\RC \left\|v^1(t)-v^2(t)\right\|_{\widetilde{H}^s(\Omega)}.
		\end{split}
		\]
		Hence, for $2s<n$ we have for all $0\leq r\leq\frac{2}{n-2s}$ the estimate \eqref{eq: subcritical case with r > sn}. Next, suppose $2s>n$, then the Sobolev embedding $H^s(\R^n)\hookrightarrow L^{\infty}(\R^n)$ and the Poincar\'e inequality directly give
		\[
		\begin{split}
			&\quad \, \left\|(|v^1(t)|^r+|v^2(t)|^r)|v^1(t)-v^2(t)|\right\|_{L^2(\Omega)})\\
			&\leq C\LC \left\|v^1(t)\right\|_{L^{\infty}(\Omega)}^r+\left\|v^2(t)\right\|_{L^{\infty}(\Omega)}^r\RC \left\|v^1(t)-v^2(t)\right\|_{L^2(\Omega)}\\
			&\leq  C\LC \left\|v^1(t) \right\|_{\widetilde{H}^s(\Omega)}^r+\left\|v^2(t)\right\|_{\widetilde{H}^s(\Omega)}^r\RC \left\|v^1(t)-v^2(t)\right\|_{\widetilde{H}^s(\Omega)}
		\end{split}
		\]
		and so again obtain \eqref{eq: subcritical case with r > sn}. Finally, assume that the critical case $2s=n$ holds. If $r>0$, then we choose $z>2$ such that $rz> 2$, use the H\"older inequality with $2<p_0<\infty$ satisfying
		\[
		\frac{1}{2}=\frac{1}{z}+\frac{1}{p_0}
		\]
		and \cite{Ozawa} together with Poincar\'e's inequality to get
		\begin{equation}
			\label{eq: critical case contraction}
			\begin{split}
				& \quad \, \left\|\LC |v^1(t)|^r+|v^2(t)|^r\RC \left|v^1(t)-v^2(t)\right|\right\|_{L^2(\Omega)})\\
				&\leq C\LC \left\|v^1(t)\right\|^r_{L^{rz}(\Omega)}+\left\|v^2(t)\right\|^r_{L^{rz}(\Omega)}\RC \left\|v^1(t)-v^2(t)\right\|_{L^{p_0}(\Omega)}\\
				&\leq C\LC \left\|v^1(t)\right\|^r_{\widetilde{H}^s(\Omega)}+\left\|v^2(t)\right\|^r_{\widetilde{H}^s(\Omega)}\RC \left\|v^1(t)-v^2(t)\right\|_{\widetilde{H}^s(\Omega)}.
			\end{split}
		\end{equation}
		Therefore, also in this case we have the final estimate in \eqref{eq: subcritical case with r > sn}. The remaining case $r=0$ is immediate. Hence, we have in all cases \eqref{eq: subcritical case with r > sn} and therefore inserting this into \eqref{eq: estimate difference in f}, we deduce the bound 
		\begin{equation}
			\label{eq: l2 estimate of f for contraction}
			\begin{split}
				&\quad \, \left\|f(\cdot,v^1(t))-f(\cdot,v^2(t))\right\|_{L^2(\Omega)}\\
				& \leq C\LC \|a\|_{L^p(\Omega)}+\left\|v^1(t)\right\|^r_{\widetilde{H}^s(\Omega)}+\left\|v^2(t)\right\|^r_{\widetilde{H}^s(\Omega)}\RC \left\|v^1(t)-v^2(t) \right\|_{\widetilde{H}^s(\Omega)}.
			\end{split}
		\end{equation}
		Next, note that by Theorem~\ref{thm: well-posedness linear} the function $u=u^1-u^2$ is the unique solution to 
		\[
		\begin{cases}
			\LC \partial_t^2 +(-\Delta)^s\RC  u=-(f(x,v^1)-f(x,v^2))-(g(x,\partial_t v^1)-g(x,\partial_t v^2))&\text{ in }\Omega_T,\\
			u=0 &\text{ in }(\Omega_e)_T,\\
			u(0)=0,\quad \partial_t u(0)=0 &\text{ in }\Omega
		\end{cases}
		\]
		and satisfies the energy estimate 
		\[
		\begin{split}
			&\quad \, \left\|\partial_t u(t)\right\|_{L^2(\Omega)}^2+\left\|(-\Delta)^{s/2}u(t)\right\|_{L^2(\R^n)}^2\\
			&\leq C\LC \left\|f(\cdot,v^1)-f(\cdot,v^2) \right\|_{L^1(0,T_0;L^2(\Omega))}^2+ \left\|g(\cdot,\partial_t v^1)-g(\cdot,\partial_t v^2) \right\|^2_{L^1(0,T_0;L^2(\Omega))}\RC \\
			&\quad +\frac{1}{2}\left\|\partial_t u \right\|_{L^{\infty}(0,T_0;L^2(\Omega))}^2\\
			&\leq CT_0^2\LC \left\|f(\cdot,v^1)-f(\cdot,v^2)\right\|_{L^{\infty}(0,T_0;L^2(\Omega))}^2+\left\|g(\cdot,\partial_t v^1)-g(\cdot,\partial_t v^2)\right\|^2_{L^{\infty}(0,T_0;L^2(\Omega))}\RC \\
			&\quad +\frac{1}{2}\left\|\partial_t u \right\|_{L^{\infty}(0,T_0;L^2(\Omega))}^2
		\end{split}
		\]
		for all $0\leq t\leq T_0$.
		For the energy estimate we used additionally the H\"older and Young inequalities. Taking the supremum over $0\leq t\leq T_0$, absorbing the third term on the left-hand side, using the estimate \eqref{eq: l2 estimate of f for contraction} and the Lipschitz continuity of $g$, we deduce 
		\begin{equation}
			%\label{eq: final estimate for contraction}
			\begin{split}
				&\quad \, \left\|\partial_t u\right\|_{L^{\infty}(0,T_0;L^2(\Omega))}^2+\|u\|_{L^{\infty}(0,T_0;\widetilde{H}^s(\Omega))}^2\\
				&\leq CT_0^2\LC \|a\|^2_{L^p(\Omega)}+\left\|v^1\right\|^{2r}_{L^{\infty}(0,T_0;\widetilde{H}^s(\Omega))}+\left\|v^2\right\|^{2r}_{L^{\infty}(0,T_0;\widetilde{H}^s(\Omega))}\RC \\
				&\qquad  \qquad \cdot  \left\|v^1-v^2 \right\|^2_{L^{\infty}(0,T_0;\widetilde{H}^s(\Omega))}\\
				&\quad \, +CT_0^2\left\|\partial_tv^1-\partial_t v^2\right\|^2_{L^{\infty}(0,T_0;L^2(\Omega))}.
			\end{split}
		\end{equation}
		This implies
		\begin{equation}
			\label{eq: final estimate for contraction}
			\begin{split}
				&\left\|S(v^1)-S(v^2)\right\|_{T_0}\leq C_2T_0\LC 1+R^r\RC \left\|v^1-v^2 \right\|_{T_0}
			\end{split}
		\end{equation}
		as $u^1,u^2 \in X_{T_0,R}$. Note that $C_2>0$ only depends on the constants in Assumption~\ref{main assumptions on nonlinearities}, the parameters $n,s,r,p$, $\Omega$ and $a$. Thus, choosing
		\begin{equation}
			\label{eq: smaller T0}
			T_0\leq \min\left(\frac{R^2}{4C_0},\frac{R}{4C_1(1+R+R^{r+1})},\frac{1}{2C_2(1+R^r)}\right)
		\end{equation}
		we see that 
		\[
		\|S(v^1)-S(v^2)\|_{T_0}\leq \frac{1}{2} \left\|v^1-v^2 \right\|_{T_0},
		\]
		making $S\colon X_{T_0,R}\to X_{T_0,R}$  a (strict) contraction.\\
		
		\noindent{\textit{Step 3.}} Next we show that there exists a unique local solution to problem \eqref{eq: well-posedness nonlinear case}.
		
		\medskip
		
		Now, if we fix $R_0$ as in \eqref{eq: choosing parameters}, take $R\geq R_0$ and let $T_0$ satisfy \eqref{eq: smaller T0}, then we know that $S$ is a strict contraction from the Banach space $X_{T_0,R}$ to itself. Thus, by the Banach contraction principle there is a unique fixed point $u\in X_{T_0,R}$, that is a solution to \eqref{eq: well-posedness nonlinear case} with $T=T_0$. Next, we assert that if $\widetilde{u}\in C([0,T_0];\widetilde{H}^s(\Omega))\cap C^1([0,T_0];L^2(\Omega))$ is any other solution to \eqref{eq: well-posedness nonlinear case} on $[0,T_0]$, then we have $\widetilde{u}=u$ on $[0,T_0]$. First note that since $\widetilde{u}(0)=u_0,\partial_t\widetilde{u}(0)=u_1$, we can pick $0<T_1\leq T_0$ such that $\|\widetilde{u}\|_{T_1}\leq R_0$. Next fix any $R\geq R_0$. Then by Steps 1 and 2, we can find a $0<T_2\leq T_1$ such that $S\colon X_{T_2,R}\to X_{T_2,R}$ is a contraction. But then the contraction principle implies that $\widetilde{u}=u$ on $[0,T_2]$. We next claim that we have $T_2=T_0$. For this purpose, let us define
		\[
		I=\left\{t\in [0,T_0] \,:\, \widetilde{u}(t)=u(t)\right\}.
		\]
		We already know that $I$ is non-empty and closed, which follows from the continuity of the functions $\widetilde{u}$ and $u$. Next, we show that it is also open. Assume that $t_0\in I$ and set 
		\[
		A'=\max\LC \|u(\cdot, t_0)\|_{\widetilde{H}^s(\Omega)}, \|\partial_t u(\cdot, t_0)\|_{L^2(\Omega)}\RC.
		\]
		Let $R'>0$ satisfy $R'\geq R'_0=4A'$ and $R'>R$. Then Step 1, 2 imply that there exists $t_0<T'_0\leq T_0$ such that there is a unique fixed point of $S_{t_0}\colon X^{t_0}_{T'_0,R'}\to X^{t_0}_{T'_0,R'}$. Here the sub/subscripts indicate that we are considering everything over the time interval $[t_0,t_0+T'_0]$. Making $T'_0$ possibly smaller, we have $\widetilde{u},u\in X^{t_0}_{T'_0,R'}$ and hence it follows that $u=\widetilde{u}$ on $\left[t_0,t_0+T'_0\right]$. Using the time-reversal symmetry of the wave operator, the same holds on some interval of the form $\left[t_0-T''_0,t_0 \right]$. This shows that there is an open neighborhood of $t_0$ contained $I$. Therefore, $I$ is also open and by connectedness we have $I=[0,T_0]$. Hence, the constructed local solution is unique.\\
		
		\noindent{\textit{Step 4.}} Finally, we show that the constructed local solution can be extended to the unique global solution and the energy identity \eqref{eq: energy identity nonlinear case} holds.
		
		\medskip
		
		First note that by Step 3, we can extend the local solution to a map $u\colon \Omega\times [0,T_*)\to \R$, where $T_*\leq T$ denotes the maximal time of existence having the property that for any $T'<T_*$ the function $u\colon \Omega\times [0,T']\to\R$ solves \eqref{eq: well-posedness nonlinear case}.
		
		Now, let us fix any $0<T'<T_*$. Then we have $u\in H^1(0,T';L^2(\Omega))$ 
		and Fubini's theorem ensures $u(x,\cdot)\in H^1([0,T'])$ for a.e. $x\in \Omega$. Next, let us fix $x\in\Omega$ such that $u(x,\cdot)\in H^1([0,T'])$. By the embedding $H^1([0,T'])\hookrightarrow L^{\infty}([0,T'])$ and $F(x,\cdot)\in C^{0,1}_{loc}(\R)$ (see \eqref{eq:estimate on f}), we can modify $\tau\mapsto F(x,\tau)$ outside a neighborhood of the compact set $\left[-\|u(x,\cdot)\|_{L^{\infty}([0,T'])}, \, \|u(x,\cdot)\|_{L^{\infty}([0,T'])}\right]$ such that $F(x,\cdot)\in C^{0,1}(\R)$. Then \cite[Theorem~2.1.11]{Ziemer} guarantees $F(x,u(x,\cdot))\in H^1((0,T'))$ and there holds
		\[
		\partial_t F(x,u(x,t))=f(x,u(x,t))\partial_t u(x,t).
		\]
		Thus, the fundamental theorem of calculus shows
		\[
		F(x,u(x,t))=F(x,u_0(x))+\int_{0}^t f(x,u(x,s))\partial_t u(x,s)\,ds
		\]
		for all $0\leq t\leq T'$. Integrating this identity over $\Omega$ and using Fubini's theorem, we get
		\begin{equation}
			\label{eq: integral for F}
			\begin{split}
				\int_{\Omega}F(x,u(x,t)) \, dx&=\int_{\Omega}F(x,u_0(x))\, dx+\int_{0}^t \int_{\Omega}f(x,u(x,s))\partial_t u(x,s)\,dx ds
			\end{split}
		\end{equation}
		for all $0\leq t\leq T'$. As $u\colon \Omega\times [0,T']\to\R$ solves the linear problem
		\begin{equation}
			\begin{cases}
				\LC \partial_t^2 +(-\Delta)^s\RC u=h-f(\cdot,u)-g(\cdot,\partial_t u)&\text{in }\Omega_{T'},\\
				u=0&\text{in }(\Omega_e)_{T'},\\
				u(0)=u_0, \quad \partial_t u(0)=u_1&\text{in }\Omega.
			\end{cases}
		\end{equation}  
		By assumption and Step 1, we know that the right-hand side is in $L^2(\Omega_{T'})$ and we deduce from Theorem~\ref{thm: well-posedness linear} that $u$ satisfies the energy identity
		\[
		\begin{split}
			&  \quad \,  \left\|\partial_t u(t) \right\|_{L^2(\Omega)}^2+ \|u(t)\|_{\widetilde{H}^s(\Omega)}^2\\
			&= \left\|u_1 \right\|_{L^2(\Omega)}^2+\|u_0\|_{\widetilde{H}^s(\Omega)}^2+2 \left\langle h-f(\cdot,u)-g(\cdot,\partial_t u),\partial_t u \right\rangle_{L^2(\Omega_t)}
		\end{split}
		\]
		for all $0\leq t\leq T'$. By \eqref{eq: integral for F} this can be rewritten as
		\begin{equation}
			\label{eq: prelim energy id}
			\begin{split}
				&\quad \, \left\|\partial_t u(t) \right\|_{L^2(\Omega)}^2+\|u(t)\|_{\widetilde{H}^s(\Omega)}^2+2\int_{\Omega}F(x,u(x,t))\,dx+2\left\langle g(x,\partial_tu),\partial_t u \right\rangle_{L^2(\Omega_t)}\\
				&= \left\|u_1 \right\|_{L^2(\Omega)}^2+\|u_0\|_{\widetilde{H}^s(\Omega)}^2+2 \left\langle h,\partial_t u \right\rangle_{L^2(\Omega_t)}+2\int_{\Omega}F(x,u_0(x))\,dx
			\end{split}
		\end{equation}
		for all $0\leq t\leq T'$. Next, recall that $\inf_{\tau\in\R}F(x,\tau)\geq -C$ and therefore we obtain by H\"older's inequality the estimate
		\[
		\begin{split}
			&\quad \, \left\|\partial_t u(t) \right\|_{L^2(\Omega)}^2+\|u(t)\|_{\widetilde{H}^s(\Omega)}^2\\
			&\leq  2\left\langle h,\partial_t u \right\rangle_{L^2(\Omega_t)}-2 \left\langle g(x,\partial_tu),\partial_t u \right\rangle_{L^2(\Omega_t)}+C\\
			&\leq C'\int_0^t  \left\|\partial_t u(s) \right\|_{L^2(\Omega)}^2\,ds+C
		\end{split}
		\]
		for all $0\leq t\leq T'$, where $C,C'>0$ are independent of $T'$. Now, Gr\"onwall's inequality implies
		\begin{equation}
			\left\|\partial_t u(t) \right\|_{L^2(\Omega)}^2+\|u(t)\|_{\widetilde{H}^s(\Omega)}^2\leq C(1+C'te^{C't})
		\end{equation}
		for all $0\leq t\leq T'$. This shows that there exists $C>0$ such that
		\begin{equation}
			\label{eqL boundedness}
			\left\|\partial_t u(t) \right\|_{L^2(\Omega)}^2+\|u(t)\|_{\widetilde{H}^s(\Omega)}^2\leq C
		\end{equation}
		for all $0\leq t<T_*$. Next recall that the extension time $T_0$ only depends on the size of the initial condition and therefore, choosing $T'$ sufficiently close to $T_*$ we can extend the solution beyond $T_*$. For this recall the local uniqueness of solutions. This contradicts the claim that $T_*<T$ is maximal. Therefore we can conclude that $T_*=T$. Now, since $u\in C([0,T];\widetilde{H}^s(\Omega))\cap C^1([0,T];L^2(\Omega))$ we can pass to the limit $T'\to T$ in \eqref{eq: prelim energy id} and obtain \eqref{eq: energy identity nonlinear case}.
	\end{proof}
	
	\begin{proposition}[Well-posedness of nonlinear nonlocal wave equation with nonzero exterior condition]
		\label{prop: nonzero exterior condition}
		Let $\Omega\subset\R^n$ be a bounded Lipschitz domain, $T>0$ and $s>0$ a non-integer. Suppose that $f$ and $g$ satisfy Assumption~\ref{main assumptions on nonlinearities}. Then for any pair $(u_0,u_1)\in \widetilde{H}^s(\Omega)\times L^2(\Omega)$, $\varphi\in C^2([0,T];\widetilde{H}^{2s}(\Omega_e))$ and $h\in L^2(\Omega_T)$, there exists a unique solution $u$ of 
		\begin{equation}
			\label{eq: well-posedness linear case nonzero}
			\begin{cases}
				\partial_t^2u +(-\Delta)^su+f(x,u)+g(x,\partial_t u)=h    &\text{in }\Omega_T,\\
				u=\varphi     &\text{in }(\Omega_e)_T,\\
				u(0)=u_0,\quad \partial_t u(0)=u_1 &\text{in }\Omega.
			\end{cases}
		\end{equation}
		This means
		\begin{enumerate}[(i)]
			\item $u\in  C([0,T];H^s(\R^n))$  with $\partial_t u\in C([0,T];L^2(\R^n))$,
			\item $u=\varphi$ a.e. in $(\Omega_e)_T$,
			\item \eqref{eq: weak solution nonlin} holds for all $v\in\widetilde{H}^s(\Omega)$ in the sense of $\distr((0,T))$
			\item and $u(0)=u_0$, $\partial_tu(0)=u_1$.
		\end{enumerate}
	\end{proposition}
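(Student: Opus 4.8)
The plan is to reduce problem~\eqref{eq: well-posedness linear case nonzero} with non-zero exterior data to the homogeneous exterior problem solved in Theorem~\ref{thm: well-posedness nonlinear}, exactly as Remark~\ref{remark: nonhom linear problem} does in the linear case. Since for each $t$ the function $\varphi(t)$ lies in $\wt H^{2s}(\Omega_e)\subset H^{2s}(\R^n)$ and is supported in $\R^n\setminus\Omega$, the map $\Phi\vcentcolon=\varphi$, regarded as an element of $C^2([0,T];H^{2s}(\R^n))$ with $\spt\Phi(t)\cap\Omega=\emptyset$, is a valid extension of the exterior datum to $\R^n_T$. I would substitute $w\vcentcolon= u-\Phi$: a function $u$ with the properties (i)--(iv) solves \eqref{eq: well-posedness linear case nonzero} if and only if $w\in C([0,T];\wt H^s(\Omega))$ with $\p_t w\in C([0,T];L^2(\Omega))$ solves, in the weak sense \eqref{eq: weak solution nonlin}, the problem
\[
\begin{cases}
\p_t^2 w+(-\Delta)^s w+\wt f(x,t,w)+\wt g(x,t,\p_t w)=\wt h &\text{in }\Omega_T,\\
w=0 &\text{in }(\Omega_e)_T,\\
w(0)=u_0,\quad \p_t w(0)=u_1 &\text{in }\Omega,
\end{cases}
\]
where $\wt f(x,t,\tau)\vcentcolon= f(x,\tau+\Phi(x,t))$, $\wt g(x,t,\tau)\vcentcolon= g(x,\tau+\p_t\Phi(x,t))$ and $\wt h\vcentcolon= h-(-\Delta)^s\Phi\big|_{\Omega_T}$. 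Here $\wt h\in L^2(\Omega_T)$ because $(-\Delta)^s\Phi\in C^2([0,T];L^2(\R^n))$ by the mapping properties of the fractional Laplacian; one uses that $\p_t^2\Phi=0$ on $\Omega_T$ since $\Phi$ vanishes there, and that $w(0)=u_0$, $\p_t w(0)=u_1$ because $\Phi(0)$ and $\p_t\Phi(0)$ vanish on $\Omega$.

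The second step is to re-run the proof of Theorem~\ref{thm: well-posedness nonlinear} for this $w$-problem. The only novelty is the explicit $t$-dependence of $\wt f$ and $\wt g$, but this is harmless, since the shifts $\Phi,\p_t\Phi$ belong to $C([0,T];H^{2s}(\R^n))\hookrightarrow C([0,T];H^s(\R^n))$ and are fixed data. Concretely, the $L^2(\Omega_T)$-bounds for the superposition operators used there --- namely \eqref{eq: L2 estimate f}, \eqref{eq: estimate Hs} and \eqref{eq: L2 estimate comp with g} --- remain valid up to an additive constant depending only on $\|\Phi\|_{C^2([0,T];H^{2s}(\R^n))}$, via $\|\wt f(\cdot,t,\psi(t))\|_{L^2(\Omega)}\le\|f(\cdot,\psi(t)+\Phi(t))\|_{L^2(\Omega)}$ and the analogous estimate for $\wt g$; moreover $\wt g(x,t,\cdot)$ is still uniformly Lipschitz in $\tau$ with $\wt g(\cdot,t,0)=g(\cdot,\p_t\Phi(t))\in C([0,T];L^2(\Omega))$, and the Lipschitz-type estimate \eqref{eq: l2 estimate of f for contraction} carries over with $v^j$ replaced by $v^j+\Phi(t)$ on its right-hand side. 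Hence the contraction-mapping argument of Steps~1--3 produces a unique local solution $w$, and the local-uniqueness/connectedness argument there gives its uniqueness on the maximal interval of existence; adding back $\Phi$ yields uniqueness of $u$.

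For the passage from local to global (Step~4) the delicate point is the a priori energy bound, where in Theorem~\ref{thm: well-posedness nonlinear} condition~\ref{prop F} (i.e.\ $F\ge -C_1$) was invoked. Its analogue here is the primitive $\wt F(x,t,\tau)\vcentcolon=\int_0^\tau\wt f(x,t,\rho)\,d\rho=F(x,\tau+\Phi(x,t))-F(x,\Phi(x,t))$, for which $\int_\Omega\wt F(x,t,w(t))\,dx\ge -C_1|\Omega|-\int_\Omega F(x,\Phi(x,t))\,dx\ge -C$ uniformly in $t\in[0,T]$: indeed the polynomial growth of $F$ implied by \eqref{eq:estimate on f}, together with $\Phi\in C([0,T];H^{2s}(\R^n))$ and the Sobolev embeddings dictated by \eqref{eq: cond on r}, makes $t\mapsto\int_\Omega F(x,\Phi(x,t))\,dx$ bounded. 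With this bound the Gr\"onwall argument of Step~4 closes as before and produces a global $w\in C([0,T];\wt H^s(\Omega))$ with $\p_t w\in C([0,T];L^2(\Omega))$. Finally $u\vcentcolon= w+\Phi$ satisfies (i) (since $\Phi\in C([0,T];H^s(\R^n))$ and $\p_t\Phi\in C([0,T];L^2(\R^n))$), (ii) and (iv) by construction, and (iii) because for $v\in\wt H^s(\Omega)$ one has $\frac{d}{dt}\langle\p_t\Phi,v\rangle_{L^2(\Omega)}=\langle\p_t^2\Phi,v\rangle_{L^2(\Omega)}=0$ and $\langle(-\Delta)^{s/2}\Phi,(-\Delta)^{s/2}v\rangle_{L^2(\R^n)}=\langle(-\Delta)^s\Phi,v\rangle_{L^2(\Omega)}$, so that adding these identities to the weak formulation of the $w$-equation cancels the two occurrences of $(-\Delta)^s\Phi$ and returns \eqref{eq: weak solution nonlin} for $u$. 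I expect the main obstacle to be of a purely bookkeeping nature: checking that introducing the temporally smooth, exterior-supported shift $\Phi$ does not degrade any of the estimates in the proof of Theorem~\ref{thm: well-posedness nonlinear}, and in particular re-deriving the a priori bound for global existence with the modified potential $\wt F$.
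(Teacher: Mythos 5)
Your plan is the same as the paper's: substitute $w=u-\varphi$ and invoke Theorem~\ref{thm: well-posedness nonlinear}. However, you miss the observation that makes this reduction a one-liner, and as a result you set off to re-prove a time-dependent generalization of Theorem~\ref{thm: well-posedness nonlinear} that is not needed.

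Since $\varphi\in C^2([0,T];\widetilde H^{2s}(\Omega_e))$, the extension $\Phi=\varphi$ (together with $\partial_t\Phi$ and $\partial_t^2\Phi$) vanishes identically on $\Omega_T$. You even use this fact twice — to kill $\partial_t^2\Phi$ on $\Omega_T$ and to match the initial data — but you do not apply it to the nonlinearities. Doing so gives, for $x\in\Omega$,
\[
\wt f(x,t,\tau)=f\bigl(x,\tau+\Phi(x,t)\bigr)=f(x,\tau),\qquad
\wt g(x,t,\tau)=g\bigl(x,\tau+\partial_t\Phi(x,t)\bigr)=g(x,\tau),
\]
i.e.\ on the entire domain where the $w$-equation is posed the nonlinearities are unchanged and have no time dependence whatsoever. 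The $w$-problem is therefore exactly problem \eqref{eq: well-posedness nonlinear case} with source $h-(-\Delta)^s\Phi\in L^2(\Omega_T)$ (the nonlocal term $(-\Delta)^s\Phi$ being the only surviving contribution of $\Phi$ inside $\Omega$), and Theorem~\ref{thm: well-posedness nonlinear} applies verbatim. Consequently your second and third paragraphs — re-running the contraction argument for time-dependent superposition operators, the shifted version of \eqref{eq: l2 estimate of f for contraction}, the modified primitive $\wt F$, and the bound on $\int_\Omega F(x,\Phi(x,t))\,dx$ — are all vacuous: on $\Omega$ one has $\wt F(x,t,\tau)=F(x,\tau)-F(x,0)=F(x,\tau)$, so nothing changes. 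The paper's proof is precisely the two-sentence version of your first paragraph with this simplification made explicit.
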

	
	\begin{proof}
		If $u\in C([0,T];H^s(\R^n))\cap C^1([0,T];L^2(\R^n))$ and $u=\varphi$ a.e. in $(\Omega_e)_T$, then $v=u-\varphi\in C([0,T];\widetilde{H}^s(\Omega))\cap C^1([0,T];L^2(\Omega))$. Hence, $u$ solves \eqref{eq: well-posedness linear case nonzero} if and only if $v$ solves \eqref{eq: well-posedness nonlinear case} with right-hand side given by $ h-(-\Delta)^s \varphi$. The latter problem is according to Theorem~\ref{thm: well-posedness nonlinear} well-posed and hence we obtain well-posedness for the problem \eqref{eq: well-posedness linear case nonzero}. 
	\end{proof}
	
	\begin{remark}
		\label{Rem: energy estimate}
		Let us adopt the assumptions of Proposition~\ref{prop: nonzero exterior condition}. Assume additionally $g=0$ and
		$\inf_{\tau\in\R} F(x,\tau)\geq 0$. Then the solution $u$ of \eqref{eq: well-posedness linear case nonzero} satisfies the energy estimate
		\begin{equation}\label{eq:nonlinear energy inequality}
			\begin{split}
				&\quad \, \Vert u-\varphi\Vert_{L^\infty(0,T;\wt H^s(\Omega))}^2 + 
				\Vert \p_t (u-\varphi)\Vert_{L^\infty(0,T;L^2(\Omega))}^2 \\
				&\leq C
				\left(
				\Vert u_0 \Vert_{\wt H^s(\Omega)}^2 + \Vert u_1 \Vert_{L^2(\Omega)}^2
				+ \Vert h-(-\Delta)^s\varphi \Vert_{L^2(\Omega_T)}^2 + \int_{\Omega}F(x,u_0(x))\,dx
				\right).
			\end{split}
		\end{equation}
		Particularly, when the initial data vanish the energy of the solution is bounded by the source and exterior data.
	\end{remark}
	\begin{proof}[Proof of Remark~\ref{Rem: energy estimate}]
		Let $u$ satisfy the equation
		\[
		\begin{cases}
			\partial_t^2u +(-\Delta)^su+f(x,u)=h    &\text{in }\Omega_T,\\
			u=\varphi     &\text{in }(\Omega_e)_T,\\
			u(0)=u_0,\quad \partial_t u(0)=u_1 &\text{in }\Omega.
		\end{cases}
		\]
		Then $v:=u-\varphi$ satisfies
		\[
		\begin{cases}
			\partial_t^2v +(-\Delta)^sv+f(x,v)=h-(-\Delta)^s\varphi    &\text{in }\Omega_T,\\
			v=0    &\text{in }(\Omega_e)_T,\\
			u(0)=u_0,\quad \partial_t u(0)=u_1 &\text{in }\Omega.
		\end{cases}
		\]
		The energy equality of Theorem~\ref{thm: well-posedness nonlinear} yields
		\begin{equation}
			\begin{split}
					&\quad\,  \left\|\partial_t v(t) \right\|_{L^2(\Omega)}^2+\|v(t)\|_{\widetilde{H}^s(\Omega)}^2+2\int_{\Omega}F(x,v(x,t))\,dx\\
				&= \left\|u_1 \right\|_{L^2(\Omega)}^2+\|u_0\|_{\widetilde{H}^s(\Omega)}^2+2 \left\langle h-(-\Delta)^s\varphi,\partial_t v \right\rangle_{L^2(\Omega_t)}+2\int_{\Omega}F(x,u_0(x))\,dx.    
			\end{split}
		\end{equation}
		Assuming that $\inf_{\tau\in\R} F(x,\tau)\geq 0$ we can estimate by H\"older's inequality
		\begin{align*}
			&\quad \, \left\|\partial_t v(t) \right\|_{L^2(\Omega)}^2+\|v(t)\|_{\widetilde{H}^s(\Omega)}^2 \\
			&\leq \left\|u_1 \right\|_{L^2(\Omega)}^2+\|u_0\|_{\widetilde{H}^s(\Omega)}^2\\
			&\quad+2 \left\langle h-(-\Delta)^s\varphi,\partial_t v \right\rangle_{L^2(\Omega_t)}
			+2\int_{\Omega}F(x,u_0(x))\,dx\\
			&\leq \left\|u_1 \right\|_{L^2(\Omega)}^2+\|u_0\|_{\widetilde{H}^s(\Omega)}^2\\
			&\quad + 2\left\| h-(-\Delta)^s\varphi \right\|_{L^2(\Omega_T)}  \Vert \p_t v(t) \Vert_{L^2(\Omega_T)} 
			+2\int_{\Omega}F(x,u_0(x))\,dx.
		\end{align*}
		Note that since $\varphi \in C^2([0,T];\wt H^{2s}(\Omega_e))$ we have $v|_{\Omega_T}=(u-\varphi)|_{\Omega_T}=u|_{\Omega_T}$.
		As the right-hand side is independent of $t$ we may take supremum over $t\in [0,T]$.
		Using the elementary inequality $2ab \leq \frac{1}{\eps}a^2 + \eps b^2$ and $\Vert \p_tv(t)\Vert_{L^2(\Omega_T)}^2 \leq C\Vert\p_t v(t) \|_{L^{\infty}(0,T; L^2(\Omega))}^2$ we may absorb the norm $\Vert\p_t v(t) \|_{L^{\infty}(0,T; L^2(\Omega))}^2$ on the left-hand side at the cost of a larger constant and obtain the claim.
	\end{proof}
	
	\subsection{The DN map}\label{sec: DN}
	
	With the well-posedness of the nonlinear nonlocal wave equation at hand, we can rigorously define the DN map. 
	
	\begin{definition}[The DN map]\label{def: the DN map}
		Let $\Omega \subset \R^n$ be a bounded domain, $T>0$ and $s>0$ a non-integer. Suppose the nonlinearities $f,$ $g$ satisfy the conditions in Assumption \ref{main assumptions on nonlinearities} and $(u_{0},u_1)\in \wt H^s(\Omega)\times L^2(\Omega)$. Then we define the DN map $\Lambda^{f,g}_{u_{0},u_{1}}$ related to
		\eqref{eq: nonlinear wave equation}
		by
		\begin{equation}
			\label{eq: modified DN map}
			\begin{split}
				\left\langle \Lambda^{f,g}_{u_0,u_1}\varphi,\psi \right\rangle \vcentcolon =	\int_{\R^n_T}(-\Delta)^{s/2}u\, (-\Delta)^{s/2}\psi\,dxdt,
			\end{split}
		\end{equation}
		for all $\varphi,\psi\in C_c^{\infty}((\Omega_e)_T)$, where $u\in C([0,T];H^s(\R^n))\cap C^1([0,T];L^2(\Omega))$ is the unique solution of
		\[
		\begin{cases}
			\partial_t^2u +(-\Delta)^su + f(x,u)+g(x,\partial_tu)=0 &\text{ in }\Omega_T,\\
			u=\varphi  &\text{ in }(\Omega_e)_T,\\
			u(x,0)=u_0(x), \quad \partial_t u(x,0)=u_1(x) &\text{ in }\Omega
		\end{cases}
		\]
		(see Proposition~\ref{prop: nonzero exterior condition}). 
	\end{definition}

	\section{Inverse problems for the nonlinear nonlocal wave equation}
	\label{sec: inverse problem}
	
	In this section we move on the inverse problem for the (nonlinear) nonlocal wave equation. First, in Section~\ref{sec: Runge} we establish the Runge approximation property for the linear nonlocal wave equation with nonnegative potentials, and arbitrary initial data. Then, in Section~\ref{sec: proofs inverse problems} we prove the main results of this work (Theorem~\ref{Thm: recovery of nonlinearity} and \ref{Thm: recovery of initial values}). 
	
	\subsection{Runge approximation}
	\label{sec: Runge}
	
	As usual we will deduce the Runge approximation property from the Hahn--Banach theorem and the UCP of the fractional Laplacian.
	\begin{proposition}[Runge approximation]\label{prop: runge}
		Let $\Omega\subset \R^n$ be a bounded Lipschitz domain, $W\subset\Omega_e$ an arbitrary open set, $s>0$ a non-integer and $T>0$. Suppose that $q\in L^p(\Omega)$ is nonnegative\footnote{This assumption is included for simplicity and the result remains true, for example, if one assumes instead $q\in L^{\infty}(\Omega)$.}, where $p$ is given by \eqref{eq: cond on p},
		and $(u_0,u_1)\in \wt H^s(\Omega)\times L^2(\Omega)$ are fixed initial conditions.
		Consider the \emph{Runge set} 
		\begin{align*}
			\mathcal{R}_W^{u_0,u_1}:=\left\{ u_{\varphi}\big|_{\Omega_T} \,:\, \varphi\in C^\infty_c (W_T) \right\},
		\end{align*}
		where $u_\varphi\in C([0,T];H^s(\R^n))\cap C^1([0,T]; L^2(\R^n))$ is the unique solution to 
		\begin{equation}\label{eq:runge with zero initial}
			\begin{cases}
				\partial_t^2u +(-\Delta)^su + qu = 0 &\text{ in }\Omega_T,\\
				u=\varphi  &\text{ in }(\Omega_e)_T,\\
				u(0)=u_0, \quad \partial_t u(0)=u_1 &\text{ in }\Omega.
			\end{cases}
		\end{equation}
		Then $\mathcal{R}_W^{u_0,u_1}$ is dense in $L^2(\Omega_T)$.
	\end{proposition}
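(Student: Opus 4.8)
The plan is to follow the classical Hahn--Banach duality argument for Runge approximation, adapted to this time-dependent nonlocal setting. Define the subspace $\mathcal{R} \vcentcolon= \mathcal{R}_W^{u_0,u_1} - u_{0,\varphi\equiv 0}|_{\Omega_T}$... actually more cleanly: fix the solution $u_0^{\flat}$ corresponding to $\varphi\equiv 0$, and observe that by linearity of \eqref{eq:runge with zero initial} the map $\varphi\mapsto u_\varphi - u_0^{\flat}$ is linear, so that $\mathcal{R}_W^{u_0,u_1}$ is an affine subspace of $L^2(\Omega_T)$; it is dense if and only if the linear subspace $\mathcal{L}\vcentcolon=\{(u_\varphi - u_0^{\flat})|_{\Omega_T} : \varphi\in C_c^\infty(W_T)\}$ is dense. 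By Hahn--Banach it suffices to show that any $w\in L^2(\Omega_T)$ with $\langle w, (u_\varphi - u_0^\flat)|_{\Omega_T}\rangle_{L^2(\Omega_T)} = 0$ for all $\varphi\in C_c^\infty(W_T)$ must vanish. The first step is therefore to introduce the appropriate adjoint (dual) problem: let $z$ solve the backward nonlocal wave equation
\begin{equation}
\begin{cases}
\partial_t^2 z + (-\Delta)^s z + q z = w &\text{ in }\Omega_T,\\
z = 0 &\text{ in }(\Omega_e)_T,\\
z(T) = 0,\quad \partial_t z(T) = 0 &\text{ in }\Omega,
\end{cases}
\end{equation}
which is well-posed in $C([0,T];\widetilde H^s(\Omega))\cap C^1([0,T];L^2(\Omega))$ by Theorem~\ref{thm: well-posedness linear} after the time reversal $t\mapsto T-t$ (the wave operator is time-reversible and $q$ is time-independent).

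The second step is the integration-by-parts (pairing) identity: test the equation for $v_\varphi \vcentcolon= u_\varphi - u_0^{\flat}$ (which solves the homogeneous equation $\partial_t^2 v_\varphi + (-\Delta)^s v_\varphi + q v_\varphi = 0$ in $\Omega_T$ with $v_\varphi = \varphi$ in $(\Omega_e)_T$ and zero initial data) against $z$, and the equation for $z$ against $v_\varphi$, and subtract. The interior terms $\partial_t^2$, $(-\Delta)^s$ and $q$ cancel by symmetry; the temporal boundary terms at $t=0$ and $t=T$ vanish because $v_\varphi$ has zero Cauchy data at $t=0$ and $z$ has zero Cauchy data at $t=T$; and one is left with a spatial nonlocal boundary pairing of the form
\[
\int_{\Omega_T} w\, v_\varphi \,dx\,dt = \int_{(\Omega_e)_T} \big((-\Delta)^s z\big)\,\varphi \,dx\,dt,
\]
where the right-hand side makes sense since $z$ is supported in $\overline\Omega$ for each $t$ so that $(-\Delta)^s z$ restricted to $(\Omega_e)_T$ is the nonlocal normal-derivative-type term. (Some care is needed here because $\partial_t z$ and $\partial_t v_\varphi$ only lie in $L^2$ and $(-\Delta)^s$ only maps $\widetilde H^s$ to $H^{-s}$; the rigorous justification will proceed by an approximation argument in the spirit of Claim~\ref{claim: integration by parts}/\cite{LionsMagenesVol1}, first proving the identity for smooth data and then passing to the limit using the energy/continuity estimates of Theorem~\ref{thm: well-posedness linear}.) The orthogonality hypothesis then forces $(-\Delta)^s z = 0$ in $W_T$ for every time slice; since also $z = 0$ in $W_T \subset (\Omega_e)_T$, the unique continuation property of the fractional Laplacian (Proposition~\ref{prop:UCP}), applied for a.e.\ fixed $t\in(0,T)$, yields $z(\cdot,t)\equiv 0$ in $\R^n$ for a.e.\ $t$, hence $z\equiv 0$.

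The third step is to conclude $w = 0$: plugging $z\equiv 0$ back into its defining equation immediately gives $w = \partial_t^2 z + (-\Delta)^s z + qz = 0$ in $\Omega_T$, which is the desired conclusion. The main obstacle is the second step — making the bilinear pairing identity rigorous given the low regularity of solutions (only $\partial_t u\in L^2$, and $(-\Delta)^s u$ only distributional). This is exactly the technical difficulty flagged in the introduction as the principal novelty compared to the viscous case; I expect it to be handled by first establishing the identity for smooth exterior data and mollified potentials where all quantities are classical, and then passing to the limit using the continuity estimate \eqref{eq: continuity estimate with non-zero ext cond} together with the mapping properties of $(-\Delta)^s$, or alternatively by invoking the weak formulation \eqref{eq: weak solution} directly and integrating in time against $z$, carefully keeping track of the distributional time derivative $\frac{d}{dt}\langle \partial_t v_\varphi, z\rangle$ and its companion $\frac{d}{dt}\langle v_\varphi, \partial_t z\rangle$. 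A secondary point requiring attention is that $w\in L^2(\Omega_T)$ needs to be extended by zero to serve as a source supported in $\Omega_T$, which is harmless, and that the nonnegativity of $q$ is used (as in the footnote) only to guarantee well-posedness of the dual problem via Theorem~\ref{thm: well-posedness linear} — though in fact Theorem~\ref{thm: well-posedness linear} as stated allows general $q\in L^p(\Omega)$, so nonnegativity is not even strictly needed here.
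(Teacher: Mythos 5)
Your proposal matches the paper's own argument essentially step for step: reduce to vanishing initial data by subtracting the $\varphi\equiv 0$ solution, dualize via Hahn--Banach against the backward adjoint problem with source $F$, establish the integration-by-parts identity through a regularization argument (the paper's Claim~\ref{claim: integration by parts} adds a viscous term $\eps(-\Delta)^s\partial_t$ and passes to the limit following \cite{LionsMagenesVol1}), and then conclude via the self-adjointness of $(-\Delta)^s$ together with the UCP applied slice-wise in time. One small imprecision: the nonnegativity $q\geq 0$ is invoked not for well-posedness of the adjoint problem (which, as you note, holds for general $q\in L^p$), but to guarantee coercivity of $(-\Delta)^s+q$ in the viscous regularization so that the Lions--Magenes machinery applies without a spectral shift $\eps\lambda$.
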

	\begin{proof}
		
		Let us first observe that without loss of generality it suffices to show that $\mathcal{R}_W:=\mathcal{R}_W^{0,0}$ is dense in $L^2(\Omega_T)$. Indeed, if $g\in L^2(\Omega_T)$ and $u_0^{u_0,u_1}$ denotes the unique solution of \eqref{eq:runge with zero initial} with $\varphi=0$, then the function $\wt g:= g - u_0^{u_0,u_1}$ belongs to $L^2(\Omega_T)$. Now, assuming $\mathcal{R}_W$ is dense in $L^2(\Omega_T)$, we can find a sequence $(v_k|_{\Omega_T})_{k=1}^\infty\subset \mathcal{R}_W$ such that $v_k|_{\Omega_T}\to \wt g|_{\Omega_T}$ in $L^2(\Omega_T)$ as $k\to\infty$. Defining $w_k:=v_k+u_0^{u_0,u_1}$, we immediately see that $w_k|_{\Omega_T} \in \mathcal{R}_W^{u_0,u_1}$ and 
		\[
		\left. w_k\right|_{\Omega_T} = \left.\LC v_k+u_0^{u_0,u_1}\RC \right|_{\Omega_T}\to  \left. \LC \wt g + u_0^{u_0,u_1}\RC \right|_{\Omega_T} = g\quad\text{as}\quad  k\to\infty
		\]
		in $L^2(\Omega_T)$.
		
		Thus, we may now suppose that $u_0=u_1=0$. We combine the strategy of \cite[Theorem~3.1]{KLW2022} and \cite[Theorem~4.3]{RZ-unbounded}. Since 
		$\mathcal{R}_W \subset L^2(\Omega_T)$
		is a subspace it is enough by the Hahn--Banach theorem to show that if 
		$F\in L^2(\Omega_T)$
		vanishes on $\mathcal{R}_W$, then $F=0$. Hence, choose any 
		$F\in L^2(\Omega_T)$
		and assume that 
		\begin{equation}
			\left\langle F,u_{\varphi}-\varphi \right\rangle_{L^2(\Omega_T)}=0\quad\text{for all}\quad \varphi\in C_c^{\infty}(W_T).
		\end{equation}
		Next, let 
		\[
		w_F\in C([0,T];\widetilde{H}^s(\Omega))\text{ with }\partial_t w_F \in C([0,T];L^2(\Omega))\text{ and }\partial_t^2w_F\in L^2(0,T;H^{-s}(\Omega))
		\]
		be the unique solution to the adjoint equation 
		\begin{equation}
			\begin{cases}
				\partial_t^2w +(-\Delta)^sw+qw = F&\text{ in }\Omega_T,\\
				w=0  &\text{ in }(\Omega_e)_T,\\
				w(T)= \partial_t w(T)=0 &\text{ in }\Omega,
			\end{cases}
		\end{equation}
		which can be obtained by \cite[Chapter~3, Theorem~8.1-8.2]{LionsMagenesVol1} and a subsequent time reversal (i.e., $t\mapsto T-t$). Next, we show the following assertion.
		\begin{claim}
			\label{claim: integration by parts}
			There holds
			\begin{equation}
				\label{eq: integration by parts}
				\int_0^T \left\langle \partial_t^2(u_{\varphi}-\varphi),w_F\right\rangle \,dt=\int_0^T \langle \partial_t^2w_F,(u_{\varphi}-\varphi)\rangle \,dt.
			\end{equation}
		\end{claim}
		\begin{proof}[Proof of Claim~\ref{claim: integration by parts}]
			Let us set $v_{\varphi}=u_{\varphi}-\varphi$ and consider for each $\eps>0$ the following regularized problem
			\begin{equation}
				\label{eq: regularization for equation of v}
				\begin{cases}
					\partial_t^2v +\eps((-\Delta)^s+q)\partial_t v+(-\Delta)^sv + qv = -(-\Delta)^s\varphi &\text{ in }\Omega_T,\\
					v=0  &\text{ in }(\Omega_e)_T,\\
					v(0)= \partial_t v(0)=0 &\text{ in }\Omega
				\end{cases}
			\end{equation}
			and
			\begin{equation}
				\label{eq: regularization for equation of w}
				\begin{cases}
					\partial_t^2w -\eps ((-\Delta)^s+q) \partial_t w+(-\Delta)^sw+qw = F&\text{ in }\Omega_T,\\
					w=0  &\text{ in }(\Omega_e)_T,\\
					w(T)= \partial_t w(T)=0 &\text{ in }\Omega.
				\end{cases}
			\end{equation}
			It is well-known that by \cite[Chapter 3, Theorem~8.3]{LionsMagenesVol1}, the above problems have unique solutions 
			\begin{equation}
				\begin{split}
					v_{\varphi}^{\eps}&\in C([0,T];\widetilde{H}^s(\Omega))\text{ with } \begin{cases}
						&\hspace{-0.3cm}\partial_t v_{\varphi}^{\eps}\in L^2(0,T;\widetilde{H}^s(\Omega))\cap C([0,T];L^2(\Omega))\\
						&\hspace{-0.3cm}\partial_t^2 v_{\varphi}^{\eps}\in L^2(0,T;H^{-s}(\Omega))
					\end{cases} \\
					w_F^{\eps}&\in C([0,T];\widetilde{H}^s(\Omega))\text{ with } \begin{cases}
						&\hspace{-0.3cm}\partial_t w_F^{\eps}\in L^2(0,T;\widetilde{H}^s(\Omega))\cap C([0,T];L^2(\Omega))\\
						&\hspace{-0.3cm}\partial_t^2 w_F^{\eps}\in L^2(0,T;H^{-s}(\Omega))
					\end{cases}
				\end{split}
			\end{equation}
			(see also \cite{zimmermann2024calderon}) and as $\eps\to 0$, one has
			\begin{equation}
				\label{eq: convergence}
				\begin{split}
					v_{\varphi}^{\eps}\to v_\varphi &\text{ in }C([0,T];\widetilde{H}^s(\Omega)),\\
					\partial_t v_{\varphi}^{\eps}\to \partial_t v_\varphi &\text{ in }C([0,T];L^2(\Omega)),\\
					\partial_t^2v_{\varphi}^{\eps}\weak \partial_t^2 v_\varphi &\text{ in } L^2(0,T;H^{-s}(\Omega)).
				\end{split}
			\end{equation}
			Note that here we use the assumption $q\geq 0$ and otherwise we would have to include a term $\eps\lambda$ for some $\lambda\in\R$ in the regularized problems \eqref{eq: regularization for equation of v} and \eqref{eq: regularization for equation of w} such that $(-\Delta)^s+q+\lambda$ is coercive.
			The last convergence in \eqref{eq: convergence} follows by \cite[Chapter 3, eq.~(8.74)]{LionsMagenesVol1} and in particular it implies that
			\begin{equation}
				\label{eq: convergence second time derivative}\partial_t^2v_{\varphi}^{\eps}\weakstar \partial_t^2 v_\varphi \text{ in } L^2(0,T;H^{-s}(\Omega))
			\end{equation}
			The convergence results in \eqref{eq: convergence} clearly hold for the functions $w_F^\eps$ and $w_F$, too. Now, using an integration by parts, we can easily get
			\[
			\int_0^T \left\langle \partial_t^2 v_{\varphi}^{\eps},w_F^\eps \right\rangle \,dt=\int_0^T \left\langle \partial_t^2w_F^\eps,v_{\varphi}^\eps \right\rangle \,dt
			\]
			for any $\eps>0$ (see \cite[eq. (4.1)]{zimmermann2024calderon}). The convergence in \eqref{eq: convergence} and \eqref{eq: convergence second time derivative} allow us to pass to the limit $\eps\to 0$, which yields
			\[
			\int_0^T \left\langle \partial_t^2 v_{\varphi},w_F \right\rangle \,dt=\int_0^T \left\langle \partial_t^2w_F,v_{\varphi} \right\rangle \,dt.
			\]
			Recalling that $v_\varphi=u_\varphi-\varphi$, we can conclude the proof.
		\end{proof}

		Then we can test the equation for $u_{\varphi}-\varphi$ against $w_F$ and correspondingly test the equation for $w_F$ by $u_{\varphi}-\varphi$. By Claim~\ref{claim: integration by parts} we may compute
		\[
		\begin{split}
			0&=\left\langle F,u_{\varphi}-\varphi\right\rangle_{L^2(\Omega_T)} \\
			&=\left\langle (\partial_t^2 +(-\Delta)^s +q)w_F,u_{\varphi}-\varphi\right\rangle\\
			&= \left\langle (\partial_t^2+(-\Delta)^s +q)(u_{\varphi}-\varphi),w_F\right\rangle\\
			&=\left\langle (-\Delta)^s \varphi,w_F \right\rangle,
		\end{split}
		\]
		for all $\varphi\in C_c^{\infty}(W_T)$. This implies that $w_F$ satisfies
		\[
		(-\Delta)^s w_F(x,t)=w_F(x,t)=0\quad\text{for}\quad  x\in W,
		\]
		for a.e. $0<t<T$. By the UCP of the fractional Laplacian \cite[Theorem~1.2]{GSU20}, this gives $w_F=0$ in $\R^n_T$. This in turn implies $F=0$ as we wish.
	\end{proof}

	\subsection{Proofs related to the inverse problems}
	
	To study the inverse problem of recovering the nonlinear terms $f_j$ we need certain mapping properties of the Nemytskii operators.
 
	\begin{lemma}[Continuity of Nemytskii operators, {\cite[Lemma~3.6]{zimmermann2024calderon}}]
		\label{lemma: Continuity of Nemytskii operators}
		Let $\Omega\subset \R^n$ be a bounded open set, $T>0$, and $1\leq q,p<\infty$. Assume that $f\colon\Omega\times \R\to\R$ is a Carath\'eodory function satisfying
		\begin{equation}
			\label{eq: cond continuity nemytskii}
			|f(x,\tau)|\leq a+b|\tau|^{\alpha}
		\end{equation}
		for some constants $a,b\geq 0$ and $0<\alpha \leq \min(p,q)$. Then the Nemytskii operator $f$, defined by
		\begin{equation}
			\label{eq: Nemytskii operator}
			f(u)(x,t)\vcentcolon = f(x,u(x,t))
		\end{equation}
		for all measurable functions $u\colon \Omega_T\to\R$, maps continuously $L^q(0,T;L^p(\Omega))$ into $L^{q/\alpha}(0,T;L^{p/\alpha}(\Omega))$.
	\end{lemma}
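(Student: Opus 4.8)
The plan is to establish boundedness of the Nemytskii operator on the indicated mixed-norm spaces and then promote this to continuity by the standard ``subsequence plus almost everywhere convergence'' argument.

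\emph{Step 1 (well-definedness and boundedness).} First I would note that for measurable $u\colon \Omega_T\to\R$ the composition $f(u)(x,t)=f(x,u(x,t))$ is measurable, since $f$ is Carath\'eodory (approximate $u$ in $\tau$ by simple functions and use continuity of $\tau\mapsto f(x,\tau)$). The growth bound \eqref{eq: cond continuity nemytskii} gives the pointwise estimate $|f(u)(x,t)|\le a+b|u(x,t)|^{\alpha}$. Because $\alpha\le \min(p,q)$, both exponents $p/\alpha$ and $q/\alpha$ are $\ge 1$, so Minkowski's inequality is available in $L^{p/\alpha}(\Omega)$ and in $L^{q/\alpha}(0,T)$; using $\bigl\||u(\cdot,t)|^{\alpha}\bigr\|_{L^{p/\alpha}(\Omega)}=\|u(\cdot,t)\|_{L^{p}(\Omega)}^{\alpha}$ and the analogous identity in time, one obtains
\[
\|f(u)\|_{L^{q/\alpha}(0,T;L^{p/\alpha}(\Omega))}\le a\,|\Omega|^{\alpha/p}T^{\alpha/q}+b\,\|u\|_{L^{q}(0,T;L^{p}(\Omega))}^{\alpha},
\]
so $f$ maps $L^{q}(0,T;L^{p}(\Omega))$ into $L^{q/\alpha}(0,T;L^{p/\alpha}(\Omega))$ and is bounded on bounded sets.

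\emph{Step 2 (continuity).} Let $u_k\to u$ in $X:=L^{q}(0,T;L^{p}(\Omega))$ and, arguing by contradiction, assume $\|f(u_k)-f(u)\|_Y\ge \eps$ for all $k$ along a subsequence (not relabeled), where $Y:=L^{q/\alpha}(0,T;L^{p/\alpha}(\Omega))$. Passing to a further subsequence $(u_{k_j})$ with $\|u_{k_{j+1}}-u_{k_j}\|_X\le 2^{-j}$ and setting $h:=|u_{k_1}|+\sum_{j\ge 1}|u_{k_{j+1}}-u_{k_j}|$, the triangle inequality and monotone convergence give $h\in X$; hence $h<\infty$ a.e., the series telescopes, $u_{k_j}(x,t)\to u(x,t)$ for a.e. $(x,t)\in\Omega_T$, and $|u_{k_j}(x,t)|\le h(x,t)$ a.e., so also $|u(x,t)|\le h(x,t)$ a.e. This is the mixed-norm Riesz--Fischer extraction, and it is the only point requiring any care; since $p,q<\infty$ the scalar telescoping argument carries over verbatim.

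Now the Carath\'eodory property yields $f(u_{k_j})(x,t)\to f(u)(x,t)$ a.e., while
\[
|f(u_{k_j})(x,t)-f(u)(x,t)|\le 2\bigl(a+b\,h(x,t)^{\alpha}\bigr)=:H(x,t),
\]
and $H\in Y$ by the estimate of Step 1. For a.e. $t$ one has $H(\cdot,t)\in L^{p/\alpha}(\Omega)$, so the dominated convergence theorem on $\Omega$ gives $\|f(u_{k_j})(t)-f(u)(t)\|_{L^{p/\alpha}(\Omega)}\to0$ for a.e. $t$; since this quantity is dominated by $\|H(\cdot,t)\|_{L^{p/\alpha}(\Omega)}\in L^{q/\alpha}(0,T)$, a second application of dominated convergence, now in $t$, gives $\|f(u_{k_j})-f(u)\|_Y\to0$, contradicting the choice of the subsequence. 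Hence $f(u_k)\to f(u)$ in $Y$, which is the asserted continuity. The expected main obstacle is the mixed-norm dominated/Riesz--Fischer extraction in Step 2; beyond that, everything is a direct consequence of Minkowski's inequality and an iterated use of the dominated convergence theorem.
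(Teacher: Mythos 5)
Your proof is correct. The paper does not actually prove this lemma in the text but cites it directly from \cite[Lemma~3.6]{zimmermann2024calderon}; your argument is the standard Krasnoselskii-type continuity proof (growth bound $\Rightarrow$ boundedness, then the subsequence/Riesz--Fischer extraction plus iterated dominated convergence) adapted to mixed Bochner--Lebesgue spaces, which is precisely the expected route for this statement.
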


	\label{sec: proofs inverse problems}
	\begin{proof}[Proof of Theorem \ref{Thm: recovery of initial values}]
		
		{\it Unique determination of initial data.}
		
		\medskip
		
		\noindent First note that the condition \eqref{same DN map in thm} is equivalent to 
		\begin{equation}
			(-\Delta)^s u^{(1)}_0=(-\Delta)^su^{(2)}_0 \text{ in }(W_2)_T,
		\end{equation}
		where $u^{(j)}_0$ is the unique solution to \eqref{eq: nonlinear wave equation j=1,2} for $j=1,2$ with the same exterior condition $\varphi=0$.
		Then the UCP for the fractional Laplacian (Proposition \ref{prop:UCP}) yields that $u_0^{(1)}=u_0^{(2)}$ in $\R^n_T$. This in turn implies 
		\begin{equation}\label{eq: unique of initial}
			u_0:=u_{0,1}=u_{0,2} \quad \text{and} \quad u_1:=u_{1,1}=u_{1,2} \quad \text{in}\quad \Omega.
		\end{equation}
		This concludes the proof.
	\end{proof}
	
	\begin{remark}
		Even if the exterior data $\varphi=0$,  the corresponding solution of \eqref{equation det of initial j=12} does not need to vanish, since the initial data may be nonzero.
	\end{remark}

	\begin{proof}[Proof of Corollary \ref{Cor: simul near}]
		
		Similarly as above, by the UCP it follows that the potential terms are equal:
		\begin{equation}\label{eq: f1 equals f2}
			a_1 u^{(1)}  = a_2 u^{(1)}  .
		\end{equation}
		This implies that there holds
		\[
		\int_{\Omega_T}a_1u\, dxdt  = \int_{\Omega_T} a_2u \, dxdt
		\]
		for all solutions $u$ of a linear wave equation with potential $a_j$. Since we assume $0\leq a_j\in L^p(\Omega)$ with \eqref{eq: cond on p} (implying $p\geq 2$) or $a_j\in L^{\infty}(\Omega)$, it follows from Proposition~\ref{prop: runge} that the solutions $u$ are dense in $L^2(\Omega_T)$. Finally, using H\"older's inequality we get 
		\[
		\int_{\Omega_T}(a_1-a_2)\psi\,dxdt=0
		\]
		for all $\psi\in C_c^{\infty}(\Omega_T)$ and hence $a_1=a_2$.
	\end{proof}
	
	Last but not least, let us prove the recovery of the nonlinear term.
	
	\begin{proof}[Proof of Theorem \ref{Thm: recovery of nonlinearity}]
		\noindent {\it Unique determination of the nonlinear term $f(x,u)$.}
		
		\medskip
		\noindent Let $\eps>0$. We start by observing that the solution $u_\eps$ of
		\[
		\begin{cases}
			\partial_t^2u +(-\Delta)^su+f(x,u)=0  &\text{in }\Omega_T,\\
			u=\eps\varphi  &\text{in }(\Omega_e)_T,\\
			u(0)= \partial_t u(0)=0 &\text{in }\Omega,
		\end{cases}
		\]
		can be expanded as $u_\eps = \eps v+ R_\eps$, where
		\begin{equation}\label{eq:ip linear wave}
			\begin{cases}
				\partial_t^2v +(-\Delta)^sv=0  &\text{in }\Omega_T,\\
				v=\varphi   &\text{in }(\Omega_e)_T,\\
				v(0)= \partial_t v(0)=0 &\text{in }\Omega
			\end{cases}    
		\end{equation}
		and 
		\begin{equation}\label{eq:remainder R}
			\begin{cases}
				\partial_t^2R_\eps +(-\Delta)^s R_\eps=  -f(x,u_\eps)  &\text{in }\Omega_T,\\
				R_\eps=0   &\text{in }(\Omega_e)_T,\\
				R_\eps(0)= \partial_t R_\eps(0)=0 &\text{in }\Omega.
			\end{cases}    
		\end{equation}
		Indeed, the function $R_\eps = u_\eps -\eps v$ satisfies \eqref{eq:remainder R}. 
		Note that by \eqref{eq: estimate Hs} for $u\in C([0,T];H^s(\R^n))$ and $f(x,0)=0$ we have the estimate
		\[
		\left\| f_j(\cdot,u)\right\|_{L^2(\Omega_T)} \lesssim \Vert u \Vert_{L^\infty(0,T;H^s(\R^n))}^{r+1}.
		\]
		Since $u_\eps\in C([0,T];H^s(\R^n))$ and $f(u_\eps)\in L^2(\Omega_T)$, then $R_\eps\in C([0,T];\wt H^s(\Omega))\cap C^1([0,T];L^2(\Omega))$ and the energy identity \eqref{eq: energy identity linear case} implies:
		\begin{equation}\label{eq: estimate for R wrt u}
			\begin{split}
				\Vert R_\eps \Vert_{L^\infty(0,T;\wt H^s(\Omega))} + \Vert \p_t R_\eps
				\Vert_{L^\infty(0,T; L^2(\Omega))}&
				\lesssim  \Vert f(u_\eps)\Vert_{L^2(\Omega_T)} \\
				&\lesssim  \Vert u_\eps \Vert_{L^\infty(0,T;H^s(\R^n))}^{r+1}.   
			\end{split}
		\end{equation}
		By Remark~\ref{Rem: energy estimate}, we get
		\begin{equation}\label{eq: estimate for u wrt phi}
			\begin{split}
				&\quad \, \Vert u_\eps \Vert_{L^{\infty}(0,T; H^s(\R^n))} +
				\Vert \p_t u_\eps \Vert_{L^\infty(0,T; L^2(\R^n))} \\
				&\lesssim 
				\Vert u_\eps - \eps\varphi \Vert_{L^\infty(0,T; \wt H^s(\Omega))} 
				+
				\Vert \p_t (u_\eps-\eps\varphi) \Vert_{L^\infty(0,T; L^2(\Omega))}\\
				&\quad \, +
				\Vert \eps\varphi \Vert_{L^\infty(0,T; \wt H^s(\Omega_e))} 
				+
				\Vert\eps \p_t \varphi \Vert_{L^\infty(0,T; L^2(\Omega_e))}\\
				&\lesssim \eps\left(
				\Vert (-\Delta)^{s} \varphi\Vert_{L^2(\Omega_T)}
				+
				\Vert \varphi \Vert_{L^\infty(0,T; \wt H^s(\Omega_e))} 
				+
				\Vert \p_t \varphi \Vert_{L^\infty(0,T; L^2(\Omega_e))}
				\right).
			\end{split}
		\end{equation}
		Therefore, by combining \eqref{eq: estimate for R wrt u} with \eqref{eq: estimate for u wrt phi} we obtain
		\begin{equation}\label{eq: estimate for R wrt eps}
			\begin{split}
				&\quad \, \Vert R_\eps \Vert_{L^\infty(0,T;\wt H^s(\Omega))} + \Vert \p_t R_\eps \Vert_{L^\infty(0,T; L^2(\Omega))}\\
				&\lesssim \eps^{r+1}\left(
				\Vert (-\Delta)^s \varphi\Vert_{L^2(\R^n_T)}
				+
				\Vert \varphi \Vert_{L^\infty(0,T; \wt H^s(\Omega))} 
				+
				\Vert \p_t \varphi \Vert_{L^\infty(0,T; L^2(\Omega_e))}
				\right)^{r+1}.    
			\end{split}
		\end{equation}
		From \eqref{eq: f1 equals f2} we know
		\[
		f_1(x,u_\eps)=f_2(x,u_\eps)
		\]
		which is equivalent to
		\[
		f_1(x,\eps v + R_\eps) = f_2(x,\eps v+R_\eps).
		\]
		By the $(r+1)$-homogeneity of $f_j$ we then find that
		\[
		f_1(x,v+\eps^{-1} R_\eps) = f_2(x,v+\eps^{-1}R_\eps).
		\]
		Next, we may use the estimate \eqref{eq: estimate for R wrt eps} for $R_\eps\in C([0,T]; \wt H^s(\Omega)) \subset L^2(\Omega_T)$ with the bound 
		\[
		\Vert R_\eps\Vert_{L^2(\Omega_T)}\lesssim \eps^{r+1}
		\]
		to deduce that 
		\[
		\eps^{-1}R_\eps \to 0\text{ in } L^2(\Omega_T) \text{ as }\eps\to 0.
		\]
		Therefore by continuity of the Nemytskii operator $f_j$ from $L^2(\Omega_T)$ to $L^\frac{2}{r+1}(\Omega_T)$ (see Lemma~\ref{lemma: Continuity of Nemytskii operators}) we get
		\[
		f_j(v+\eps^{-1}R_\eps)\to f_j(v)\text{ in } L^\frac{2}{r+1}(\Omega_T),\quad \text{as} \quad \eps\to 0,
		\]
	for all $0<r\leq 1$.
 
		Thus we have obtained that $f_1(x,v)=f_2(x,v)$ in $L^\frac{2}{r+1}(\Omega_T)$ for all solutions $v$ of the linear wave equation \eqref{eq:ip linear wave}. Finally, we can apply the Runge approximation property of solutions $v$ to linear wave equations with $q=0$. Let $(v_k)_{k=1}^\infty\subset \mathcal{R}_{W_1}$ be a sequence of solutions to the linear nonlocal wave equations with $v_k\big|_{\Omega_T}\to 1$ in $L^2(\Omega_T)$ given by Proposition~\ref{prop: runge}. Then continuity of $f_j: L^2(\Omega_T)\to L^\frac{2}{r+1}(\Omega_T)$, when $0<r\leq 1$, yields
		\[
		\left\| f_j(x,1)-f_j(x,v_k)\right\|_{L^\frac{2}{r+1}(\Omega_T)} \to 0\quad\text{as } k\to\infty.
		\]
		Thus we obtain 
		\[
		f_1(x,1)=f_2(x,1)
		\]
		for almost all $x\in\Omega$. Therefore, by homogeneity we recover $f_1(x,\tau)=f_2(x,\tau)$ for a.e. $x\in\Omega$ and $\tau\in \R$.
	\end{proof}
	
	\begin{remark}
		It is noteworthy that we do not need to use the (in local case) commonly used \emph{higher order linearization} (popularized for inverse problems in \cite{KLU18,LLLS21} for both hyperbolic and elliptic equations). In the local case, the higher order linearization is used to builds products $v_1\cdots v_k$ of solutions to linear equations. Then one can show that these products become dense in suitable function spaces. In the nonlocal case the UCP of the fractional Laplacian directly establishes that $f_1(u)=f_2(u)$ for any solution $u$ of the nonlinear nonlocal wave equation. Then it suffices to linearize once to recover $f_1(v)=f_2(v)$ for any solution $v$ of a linear wave equation. Finally, the Runge approximation property yields the equality $f_1=f_2$. In a sense, this is consequence of the strength of nonlocality and its UCP. This technique applies to many equations of similar type, including as a typical example the fractional Schr\"odinger equation $(-\Delta)^s u +q(u)=0$, since all one needs is the UCP and the Runge approximation.
	\end{remark}

	\medskip 
	
	\noindent\textbf{Acknowledgments.} 
	\begin{itemize}
		\item Y.-H.~Lin was partially supported by the National Science and Technology Council (NSTC) Taiwan, under the project 113-2628-M-A49-003. Y.-H. Lin is also a Humboldt research fellow. 
		\item T.~Tyni was supported by the Research Council of Finland (Flagship of Advanced Mathematics for Sensing, Imaging and Modelling grant 359186) and by the Emil Aaltonen Foundation. T.~Tyni is grateful for the hospitality provided by T.~Balehowsky during his visit to University of Calgary in 2023.
		\item P.~Zimmermann was supported by the Swiss National Science Foundation (SNSF), under the grant number 214500.
	\end{itemize}

	\bibliography{refs} 

\begin{thebibliography}{LLPMT24}

\bibitem[CGRU23]{CGRU2023reduction}
Giovanni Covi, Tuhin Ghosh, Angkana R{\"u}land, and Gunther Uhlmann.
\newblock A reduction of the fractional {C}alder\'on problem to the local
  {C}alder\'on problem by means of the {C}affarelli-{S}ilvestre extension.
\newblock {\em arXiv preprint arXiv:2305.04227}, 2023.

\bibitem[CLL19]{CLL2017simultaneously}
Xinlin Cao, Yi-Hsuan Lin, and Hongyu Liu.
\newblock Simultaneously recovering potentials and embedded obstacles for
  anisotropic fractional {S}chr\"{o}dinger operators.
\newblock {\em Inverse Probl. Imaging}, 13(1):197--210, 2019.

\bibitem[CLR20]{cekic2020calderon}
Mihajlo Cekic, Yi-Hsuan Lin, and Angkana R{\"u}land.
\newblock The {C}alder{\'o}n problem for the fractional {S}chr{\"o}dinger
  equation with drift.
\newblock {\em Cal. Var. Partial Differential Equations}, 59(91), 2020.

\bibitem[CMRU22]{CMRU20}
Giovanni Covi, Keijo M\"{o}nkk\"{o}nen, Jesse Railo, and Gunther Uhlmann.
\newblock The higher order fractional {C}alder\'{o}n problem for linear local
  operators: {U}niqueness.
\newblock {\em Adv. Math.}, 399:Paper No. 108246, 2022.

\bibitem[CRTZ22]{CRTZ-2022}
Giovanni Covi, Jesse Railo, Teemu Tyni, and Philipp Zimmermann.
\newblock Stability estimates for the inverse fractional conductivity problem,
  2022.

\bibitem[dHUW18]{dHUW18}
Maarten de~Hoop, Gunther Uhlmann, and Yiran Wang.
\newblock Nonlinear interaction of waves in elastodynamics and an inverse
  problem.
\newblock {\em Mathematische Annalen}, pages 1--31, 2018.

\bibitem[DL92]{DautrayLionsVol5}
Robert Dautray and Jacques-Louis Lions.
\newblock {\em Mathematical analysis and numerical methods for science and
  technology. {V}ol. 5}.
\newblock Springer-Verlag, Berlin, 1992.
\newblock Evolution problems. I, With the collaboration of Michel Artola,
  Michel Cessenat and H\'{e}l\`ene Lanchon, Translated from the French by Alan
  Craig.

\bibitem[GRSU20]{GRSU18}
Tuhin Ghosh, Angkana R\"{u}land, Mikko Salo, and Gunther Uhlmann.
\newblock Uniqueness and reconstruction for the fractional {C}alder\'{o}n
  problem with a single measurement.
\newblock {\em J. Funct. Anal.}, 279(1):108505, 42, 2020.

\bibitem[GSU20]{GSU20}
Tuhin Ghosh, Mikko Salo, and Gunther Uhlmann.
\newblock The {C}alder\'{o}n problem for the fractional {S}chr\"{o}dinger
  equation.
\newblock {\em Anal. PDE}, 13(2):455--475, 2020.

\bibitem[HL19]{harrach2017nonlocal-monotonicity}
Bastian Harrach and Yi-Hsuan Lin.
\newblock Monotonicity-based inversion of the fractional {S}chr\"{o}dinger
  equation {I}. {P}ositive potentials.
\newblock {\em SIAM J. Math. Anal.}, 51(4):3092--3111, 2019.

\bibitem[HL20]{harrach2020monotonicity}
Bastian Harrach and Yi-Hsuan Lin.
\newblock Monotonicity-based inversion of the fractional {S}ch\"{o}dinger
  equation {II}. {G}eneral potentials and stability.
\newblock {\em SIAM J. Math. Anal.}, 52(1):402--436, 2020.

\bibitem[KLOU22]{KLOU14}
Yaroslav Kurylev, Matti Lassas, Lauri Oksanen, and Gunther Uhlmann.
\newblock Inverse problem for einstein-scalar field equations.
\newblock {\em Duke Mathematical Journal}, 171(16):3215--3282, 2022.

\bibitem[KLU18]{KLU18}
Yaroslav Kurylev, Matti Lassas, and Gunther Uhlmann.
\newblock Inverse problems for {L}orentzian manifolds and non-linear hyperbolic
  equations.
\newblock {\em Inventiones mathematicae}, 212(3):781--857, 2018.

\bibitem[KLW22]{KLW2022}
Pu-Zhao Kow, Yi-Hsuan Lin, and Jenn-Nan Wang.
\newblock The {C}alder\'{o}n problem for the fractional wave equation:
  uniqueness and optimal stability.
\newblock {\em SIAM J. Math. Anal.}, 54(3):3379--3419, 2022.

\bibitem[KLZ22]{KLZ-2022}
Manas Kar, Yi-Hsuan Lin, and Philipp Zimmermann.
\newblock Determining coefficients for a fractional $p$-laplace equation from
  exterior measurements, 2022.

\bibitem[KRZ23]{KRZ-2023}
Manas Kar, Jesse Railo, and Philipp Zimmermann.
\newblock The fractional {$p\,$}-biharmonic systems: optimal {P}oincar\'{e}
  constants, unique continuation and inverse problems.
\newblock {\em Calc. Var. Partial Differential Equations}, 62(4):Paper No. 130,
  36, 2023.

\bibitem[Lin22]{lin2020monotonicity}
Yi-Hsuan Lin.
\newblock Monotonicity-based inversion of fractional semilinear elliptic
  equations with power type nonlinearities.
\newblock {\em Calc. Var. Partial Differential Equations}, 61(5):Paper No. 188,
  30, 2022.

\bibitem[LL22]{LL2020inverse}
Ru-Yu Lai and Yi-Hsuan Lin.
\newblock Inverse problems for fractional semilinear elliptic equations.
\newblock {\em Nonlinear Anal.}, 216:Paper No. 112699, 21, 2022.

\bibitem[LL23]{LL2022inverse}
Yi-Hsuan Lin and Hongyu Liu.
\newblock Inverse problems for fractional equations with a minimal number of
  measurements.
\newblock {\em Communications and Computational Analysis}, 1:72--93, 2023.

\bibitem[LLL24]{LLL2024determining}
Yi-Hsuan Lin, Hongyu Liu, and Xu~Liu.
\newblock Determining a nonlinear hyperbolic system with unknown sources and
  nonlinearity.
\newblock {\em Journal of the London Mathematical Society}, 109(2):e12865,
  2024.

\bibitem[LLLS21]{LLLS21}
Matti Lassas, Tony Liimatainen, Yi-Hsuan Lin, and Mikko Salo.
\newblock Inverse problems for elliptic equations with power type
  nonlinearities.
\newblock {\em J. Math. Pures Appl. (9)}, 145:44--82, 2021.

\bibitem[LLPMT21]{LLPT21}
Matti Lassas, Tony Liimatainen, Leyter Potenciano-Machado, and Teemu Tyni.
\newblock Stability estimates for inverse problems for semi-linear wave
  equations on {L}orentzian manifolds.
\newblock {\em arXiv preprint arXiv:2106.12257}, 2021.

\bibitem[LLPMT22]{LLPT20}
Matti Lassas, Tony Liimatainen, Leyter Potenciano-Machado, and Teemu Tyni.
\newblock Uniqueness, reconstruction and stability for an inverse problem of a
  semi-linear wave equation.
\newblock {\em J. Diff. Eq.}, 337:395--435, 2022.

\bibitem[LLPMT24]{LLPT24}
Matti Lassas, Tony Liimatainen, Leyter Potenciano-Machado, and Teemu Tyni.
\newblock An inverse problem for a semi-linear wave equation: a numerical
  study.
\newblock {\em Inv. Probl. Imag.}, 18(1):62--85, 2024.

\bibitem[LLU23]{LLU2023calder}
Ching-Lung Lin, Yi-Hsuan Lin, and Gunther Uhlmann.
\newblock The {C}alder\'{o}n problem for nonlocal parabolic operators: {A} new
  reduction from the nonlocal to the local.
\newblock {\em arXiv preprint arXiv:2308.09654}, 2023.

\bibitem[LM12]{LionsMagenesVol1}
Jacques~Louis Lions and Enrico Magenes.
\newblock {\em Non-homogeneous boundary value problems and applications: Vol.
  1}, volume 181.
\newblock Springer Science \& Business Media, 2012.

\bibitem[LUW17]{LUW17arXiv}
Matti Lassas, Gunther Uhlmann, and Yiran Wang.
\newblock Determination of vacuum space-times from the {E}instein-{M}axwell
  equations.
\newblock {\em arXiv preprint arXiv:1703.10704}, 2017.

\bibitem[LUW18]{LUW18}
Matti Lassas, Gunther Uhlmann, and Yiran Wang.
\newblock Inverse problems for semilinear wave equations on {L}orentzian
  manifolds.
\newblock {\em Communications in Mathematical Physics}, 360:555--609, 2018.

\bibitem[LZ23]{LZ2023unique}
Yi-Hsuan Lin and Philipp Zimmermann.
\newblock Unique determination of coefficients and kernel in nonlocal porous
  medium equations with absorption term.
\newblock {\em arXiv preprint arXiv:2305.16282}, 2023.

\bibitem[Oza95]{Ozawa}
Tohru. Ozawa.
\newblock On critical cases of {S}obolev's inequalities.
\newblock {\em J. Funct. Anal.}, 127(2):259--269, 1995.

\bibitem[RS20]{RS17}
Angkana R\"{u}land and Mikko Salo.
\newblock The fractional {C}alder\'{o}n problem: low regularity and stability.
\newblock {\em Nonlinear Anal.}, 193:111529, 56, 2020.

\bibitem[RZ23]{RZ-unbounded}
Jesse Railo and Philipp Zimmermann.
\newblock Fractional {C}alder\'{o}n problems and {P}oincar\'{e} inequalities on
  unbounded domains.
\newblock {\em J. Spectr. Theory}, 13(1):63--131, 2023.

\bibitem[RZ24]{RZ2022LowReg}
Jesse Railo and Philipp Zimmermann.
\newblock Low regularity theory for the inverse fractional conductivity
  problem.
\newblock {\em Nonlinear Analysis}, 239:113418, 2024.

\bibitem[Sil16]{peridynamics}
S.~A. Silling.
\newblock {\em Introduction to peridynamics}.
\newblock Chapman and Hall/CRC., 2016.

\bibitem[WZ19]{WZ19}
Yiran Wang and Ting Zhou.
\newblock Inverse problems for quadratic derivative nonlinear wave equations.
\newblock {\em Communications in Partial Differential Equations},
  44(11):1140--1158, 2019.

\bibitem[Zie89]{Ziemer}
William~P. Ziemer.
\newblock {\em Weakly differentiable functions}, volume 120 of {\em Graduate
  Texts in Mathematics}.
\newblock Springer-Verlag, New York, 1989.
\newblock Sobolev spaces and functions of bounded variation.

\bibitem[Zim23]{zimmermann2023inverse}
Philipp Zimmermann.
\newblock Inverse problem for a nonlocal diffuse optical tomography equation.
\newblock {\em Inverse Problems}, 39(9):Paper No. 094001, 25, 2023.

\bibitem[Zim24]{zimmermann2024calderon}
Philipp Zimmermann.
\newblock Calder\'on problem for nonlocal viscous wave equations: Unique
  determination of linear and nonlinear perturbations, 2024.

\end{thebibliography}
	
	\bibliographystyle{alpha}
	
\end{document}